\def\@begintheorem#1#2{\par\bgroup{\sc #1 \ #2. }  \it \\\ignorespace }
\def\@opargbegintheorem#1#2#3{\par\bgroup{\sc #1\ #2 \ (#3).}  \it  \ignorespace}
\def\@endtheorem{\egroup}
\theoremstyle{plain}
\newtheorem{theorem}{Theorem}[section]
\theoremstyle{definition}
\theoremstyle{remark}
\newtheorem{remark}[theorem]{Remark}
\theoremstyle{plain}
\theoremstyle{plain}
\newtheorem{lemma}[theorem]{Lemma}
\theoremstyle{plain}
\theoremstyle{plain}
\numberwithin{equation}{section}
\newcommand{\zb}{{\bar{z}}}
\newcommand{\C}{{\mathbb C}}
\newcommand{\N}{{\mathbb N}}
\newcommand{\D}{\mathcal{D}}
\newcommand{\Kel}{\mathcal{K}}
\newcommand{\R}{{\mathbb R}}
\newcommand{\V}{\mathbb{V}}
\newcommand{\supp}{{\rm supp}}
\lbrace\begin{array}{@{}l@{}}}%
\DeclarePairedDelimiter{\abs}{\lvert}{\rvert}
\DeclarePairedDelimiter{\norm}{\lVert}{\rVert}
\newcommand{\E}{\textbf{E}}
\newcommand{\ignora}[1]{}
\title{On the  strong unique continuation property for the Dirac operator}
\date{\today}
\author[B. Cassano]{Biagio Cassano}
\subjclass[2020]{Primary 35B60; Secondary 35Q40,  35B99.}
\address{B.~Cassano, Department of Mathematics and Physics, %
  Universit\`{a} degli  Studi della Campania, %
  viale Lincoln 5, 81100, Caserta, Italy}
\email{biagio.cassano@unicampania.it}
\begin{document}
\begin{abstract}
In \cite{de1999strong,kalf1999note}, the strong unique continuation property from the origin is established for $H_{loc}^1$-solutions to the massless Dirac differential inequality $|\D_n u | \leq \frac{C}{\abs{x}}|u|$, in dimension $n\geq 2$ and with $C<\frac12$. We show that $\frac12$ is the largest possibile constant in this result, providing an example in $\R^2$ of a (non-trivial) solution of the inequality. Also, we show properties of unique continuation from the origin for solutions to the inequality $|\D_n u | \leq \frac{C}{ \abs{x}^{\gamma}}|u|$, for $\gamma>1$, $C>0$.
Finally, we establish the strong unique continuation property for the Dirac operator from the point at infinity.
\end{abstract}
\maketitle



\section{Introduction}
In this paper, we discuss properties of unique continuation for the Dirac operator.
In general, a partial differential equation or inequality defined in a connected domain has the \emph{unique continuation property} if all its solutions  that vanish in a nonempty open set vanish everywhere in the domain. Establishing the unique continuation property is instrumental to prove the uniqueness of the solution, or can be used to show the absence of embedded eigenvalues for the differential operator.
We say that a partial differential equation or inequality has the \emph{strong unique continuation property} if all its solutions that vanish at infinite order at a point $x_0$, that is
\begin{equation}\label{eq:vanishing.infinite.order}
\lim_{R\to 0} \frac{1}{R^k} \int_{\{|x-x_0|<R\}} |u(x)|^2 \, dx = 0 \quad \text{ for all }k \in \N,
\end{equation}
vanish in a neighborhood of $x_0$.
There is a large literature on the unique continuation property for the Dirac operator: we refer to 
\cite{jerison1986carleman,vogelsang1987absence,mandache1994some,kim1995carleman,de1999strong,kalf1999note,booss2000unique, salo2009carleman, esteban2021dirac, jeong2022carleman} and references therein.
We refer also to \cite{jerison1985unique,stein1985appendix,garofalo1987unique,wolff1990unique} for results on unique continuation for the Laplace operator that are related to our topic, to \cite{seo2015unique} for the fractional laplacian, and to \cite{pan2024unique} where the operator
 $\bar\partial$ is considered.

In order to state our results, we remind the definition and some
well known properties of the Dirac operator.
For $n\geq 2$, let $N := 2^{\lfloor \frac{n+1}{2}\rfloor}$, where
$\lfloor \cdot \rfloor$ denotes the integer part of a real number.
It is well known that there exist Hermitian matrices
$\alpha_1,\dots,\alpha_n, \alpha_{n+1} \in \C^{N\times N}$
that satisfy the anticommutation relations
\begin{equation}
  \label{eq:anticommutation}
  \alpha_j \alpha_k +
  \alpha_k \alpha_j =
  2 \delta_{j,k} \mathbb{I}_N, 
  \quad 
  1\leq j,k \leq n+1,
\end{equation}
where $\delta_{j,k}$ is the Kronecker delta and $\mathbb{I}_N$ is the 
$N \times N$ identity matrix; these matrices form a representation of the Clifford algebra of the
Euclidean space (see e.g.~\cite{boussaid2019nonlinear}). 
The Dirac operator acts on functions $\psi : \R^n \to \C^{N}$ and
it is defined as follows:
\begin{equation}\label{eq:defn.Dnm}
  \D_{n,m} \psi := -i \alpha \cdot \nabla \psi + m \alpha_{n+1}\psi
  := -i \sum_{j=1}^n \alpha_j \partial_j \psi + m \alpha_{n+1}\psi,
\end{equation}
where $m \in \R$. From \eqref{eq:anticommutation} we have that
$\D_{n,m}^2 = (-\Delta + m^2)\mathbb{I}_{N}$.
We remark that  $\D_{n,m}$ is self-adjoint in $L^2(\R^n; \C^N)$
with domain $H^1(\R^n; \C^N)$. 
In the following, when $m=0$ we denote $\D_{n}:=\D_{n,0}$.

In the definition of the Dirac operator, an explicit choice for the matrices $\alpha_j$ in \eqref{eq:defn.Dnm}
 is not required, since all the results can be obtained
using their fundamental property \eqref{eq:anticommutation};
however it is convenient to give one standard choice in the
case $n=2$, for the purpose of making computations. 
We choose explicitely
$(\alpha_1,\alpha_2,\alpha_3):=(\sigma_1,\sigma_2,\sigma_3)$, where
$\sigma_j$ are the \emph{Pauli matrices}
\begin{equation}\label{eq:paulimatrices}
\quad{\sigma}_1 =
\begin{pmatrix}
0 & 1\\
1 & 0
\end{pmatrix},\quad {\sigma}_2=
\begin{pmatrix}
0 & -i\\
i & 0
\end{pmatrix},
\quad{\sigma}_3=
\begin{pmatrix}
1 & 0\\
0 & -1
\end{pmatrix}.
\end{equation}
Using the usual identification
$(x_1,x_2)\in\R^2 \mapsto z=x_1 + i x_2 \in \C$, the two-dimensional Dirac operator is then
\begin{equation}\label{eq:defn.D}
	\D_{2,m} := 
	- i \sigma_1 \partial_1 - i \sigma_2 \partial_2 + m \sigma_3 =
	\begin{pmatrix}
	m & -2i \partial_z \\
	-2i \partial_{\bar z} & -m 
         \end{pmatrix},
\end{equation}
where we denote $2\partial_z = \partial_1 -i \partial_2$,
$2\partial_{\zb} = \partial_1 + i \partial_2$.

De Carli and Okaji \cite{de1999strong} and Kalf and Yamada \cite{kalf1999note} show the following result of strong unique continuation for the Dirac Operator.
\begin{theorem}[\cite{de1999strong,kalf1999note}]\label{thm:DCOKY}
Let $n\geq 2$ and $\Omega \subset \R^n$ be a domain containing the origin. Let $ u \in H^1_{loc} (\Omega;\C^N)$ be a solution to the differential inequality
\begin{equation}\label{eq:main}
|\D_n u(x)| \leq \frac{C}{|x|}| u(x) |,  \quad x \in \Omega\setminus\{0\},
\end{equation}
for  
\begin{equation}\label{eq:sharpness}
C \in \left(0,\frac12 \right),
\end{equation}
 and assume that $u$ vanishes at infinite order at the origin, that is
\begin{equation}\label{eq:vanishing.infinite.order.origin}
\lim_{R\to 0} \frac{1}{R^k} \int_{\{|x|<R\}} |u(x)|^2 \, dx = 0, \quad \text{ for all }k \in \N.
\end{equation}
Then, $u$ vanishes in $\Omega$.
\end{theorem}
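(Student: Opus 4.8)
The plan is to reduce the theorem to a scale-invariant Carleman estimate for $\D_n$ with the power weight $|x|^{-\tau}$, to use it --- together with a cut-off and a passage to the limit that exploits the infinite-order vanishing \eqref{eq:vanishing.infinite.order.origin} --- to deduce that $u$ vanishes in a neighbourhood of the origin, and finally to propagate this to all of $\Omega$ via the classical weak unique continuation property. The hypothesis $C<\tfrac12$ will enter exactly once, in the Carleman estimate; the whole point of that estimate is that the constant $\tfrac12$ can be \emph{reached} but not improved, which is consistent with the fact (established elsewhere in the paper) that $C=\tfrac12$ genuinely fails.

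First I would establish the Carleman estimate: there is a sequence $\tau_j\to+\infty$ with
\[
\tfrac12\,\big\||x|^{-\tau_j-1}v\big\|_{L^2(\R^n)}\le\big\||x|^{-\tau_j}\D_n v\big\|_{L^2(\R^n)},\qquad v\in C^\infty_c(\R^n\setminus\{0\};\C^N).
\]
The natural route is the polar-coordinate factorization $\D_n=-i\alpha_r\big(\de_r+\tfrac{n-1}{2r}+\tfrac1r\,\mathcal K\big)$, where $\alpha_r:=\alpha\cdot x/|x|$ satisfies $\alpha_r^2=\I_N$ and $\mathcal K$ is an angular operator with purely discrete spectrum. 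The key structural fact is that this decomposition block-diagonalizes $\D_n$ into a sequence of $2\times2$ first-order radial systems, indexed by the angular modes $k$, each of which --- after conjugating away the weight and diagonalizing --- reduces to the classical one-dimensional Hardy inequality on $(0,\infty)$ and yields a lower bound with constant $\mathrm{dist}(\tau,\lambda_k)$, where $\{\lambda_k\}$ forms a discrete set $\Sigma\subset\R$ determined by the spectrum of $\mathcal K$. Since $\Sigma$ lies in a fixed translate of $\Z$, one can choose $\tau_j\to+\infty$ with $\mathrm{dist}(\tau_j,\Sigma)=\tfrac12$; summing the squared block estimates then gives the displayed inequality with constant $\tfrac12$, and a density argument extends it to $v\in H^1(\R^n;\C^N)$ compactly supported in $\R^n\setminus\{0\}$.

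Next I would localize and pass to the limit. Fix $\rho_0>0$ with $\overline{B_{2\rho_0}}\subset\Omega$, a cut-off $\chi\in C^\infty_c(B_{2\rho_0})$ with $\chi\equiv1$ on $B_{\rho_0}$, and cut-offs $\psi_\delta$ with $\psi_\delta\equiv1$ on $\{|x|>\delta\}$, $\psi_\delta\equiv0$ on $\{|x|<\delta/2\}$, $|\nabla\psi_\delta|\leqc\delta^{-1}$. Applying the Carleman estimate to $v=\chi\psi_\delta u\in H^1(\R^n;\C^N)$ (supported away from $0$, since $u\in H^1_{loc}$), using $\D_n(\chi\psi_\delta u)=\chi\psi_\delta\,\D_n u-i\big(\alpha\cdot\nabla(\chi\psi_\delta)\big)u$, the bound $|\D_n u|\le C|x|^{-1}|u|$, and $|\alpha\cdot\xi|=|\xi|$, the triangle inequality gives
\[
\Big(\tfrac12-C\Big)\big\||x|^{-\tau_j-1}\chi\psi_\delta u\big\|_{L^2}\le\big\||x|^{-\tau_j}(\alpha\cdot\nabla\psi_\delta)\chi u\big\|_{L^2}+\big\||x|^{-\tau_j}(\alpha\cdot\nabla\chi)\psi_\delta u\big\|_{L^2}.
\]
For $\delta$ small the first term on the right is supported in $\{\delta/2<|x|<\delta\}$ (where $\chi\equiv1$), so its square is $\leqc4^{\tau_j}\delta^{-2\tau_j-2}\int_{B_\delta}|u|^2\leqc4^{\tau_j}\delta^{-k}\int_{B_\delta}|u|^2$ for any integer $k\ge2\tau_j+2$, which tends to $0$ as $\delta\to0$ by \eqref{eq:vanishing.infinite.order.origin}. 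Since $\nabla\chi$ is supported in $\{\rho_0\le|x|\le2\rho_0\}$, letting $\delta\to0$ yields
\[
\Big(\tfrac12-C\Big)\big\||x|^{-\tau_j-1}\chi u\big\|_{L^2}\le\|\nabla\chi\|_{L^\infty}\,\rho_0^{-\tau_j}\,\|u\|_{L^2(\{\rho_0<|x|<2\rho_0\})};
\]
bounding the left-hand side from below by its restriction to a fixed ball $B_\sigma$ with $\sigma\in(0,\rho_0)$ and using $C<\tfrac12$, we get
\[
\|u\|_{L^2(B_\sigma)}\le\frac{\|\nabla\chi\|_{L^\infty}}{\tfrac12-C}\,\sigma\Big(\frac{\sigma}{\rho_0}\Big)^{\tau_j}\|u\|_{L^2(B_{2\rho_0})}\xrightarrow[\,j\to\infty\,]{}0,
\]
so $u\equiv0$ in $B_\sigma$, i.e.\ $u$ vanishes in a neighbourhood of the origin. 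Finally, on every compact subset of $\Omega\setminus\{0\}$ the coefficient $C/|x|$ is bounded, so $|\D_n u|\le W|u|$ with $W\in L^\infty_{loc}(\Omega\setminus\{0\})$; since $n\ge2$ the set $\Omega\setminus\{0\}$ is connected, and the classical weak unique continuation property for the Dirac operator with locally bounded potential (see \cite{de1999strong,kalf1999note,vogelsang1987absence}) propagates the vanishing of $u$ to all of $\Omega\setminus\{0\}$, hence to $\Omega$.

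The main obstacle is the Carleman estimate with the \emph{sharp} constant $\tfrac12$: one must produce arbitrarily large exponents $\tau_j$ for which the one-dimensional Hardy constant is simultaneously $\ge\tfrac12$ in \emph{every} angular block, i.e.\ for which $\tau_j$ stays at distance exactly $\tfrac12$ from the forbidden set $\Sigma$ dictated by the spectrum of the spherical Dirac operator --- this is the precise arithmetic matching between $\mathrm{spec}\,\mathcal K$ (a shifted lattice, containing consecutive integers far out, which is why $\tfrac12$ is optimal) and the power of the weight. Any loss here --- a crude treatment of the $2\times2$ radial systems, or a suboptimal choice of the $\tau_j$ --- would only yield the conclusion for a smaller range of $C$; by contrast, the localization, the limit $\delta\to0$, and the propagation step are routine.
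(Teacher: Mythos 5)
The paper does not prove \Cref{thm:DCOKY}: it is imported verbatim from \cite{de1999strong,kalf1999note}, so there is no internal proof to compare against. Your outline is precisely the argument of those references --- a sharp Carleman estimate with weight $|x|^{-\tau}$ obtained from the angular decomposition of $\D_n$ and one-dimensional Hardy-type bounds on each radial block, a cut-off near the origin whose error term is killed by the infinite-order vanishing \eqref{eq:vanishing.infinite.order.origin}, and propagation by weak unique continuation on the connected set $\Omega\setminus\{0\}$ --- and the localization, limiting and propagation steps are carried out correctly. The only caveat is that the entire analytic content of the theorem (that the distance from $\tau_j$ to the exceptional set $\Sigma$ can be kept equal to $\tfrac12$ along a sequence $\tau_j\to\infty$, uniformly over all angular blocks) is asserted rather than proved; as you yourself note, that is where the constant $\tfrac12$ of \eqref{eq:sharpness} is actually earned.
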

\begin{remark}\label{rmk:lightening}
The interested reader may also refer to \cite[Theorem 2.1]{kalf2015dirac} (see also \cite{kalf2019erratum}), where
it is shown that if $u \in H_{loc}^{1}(\Omega ;\C^N) \cap L^2(\Omega,|x|^{-1/2}dx;\C^N)$ is solution to \eqref{eq:main} for $C \in (0,\frac{n-1}{2})$, then $u$ vanishes in $\Omega$.
\end{remark}
\begin{remark}
Perturbations of the order $\abs{x}^{-1}$ are critical for the Dirac operator from the point of view of scaling. For more general results in the scaling critical setting we refer to \cite{esteban2021dirac}, where the strong unique continuation property from one point is proven for solutions to $|\D_{3,0} u| \leq V |u|$, assuming that $V$ is locally in the Lorentz space $L^{3,\infty}$ and has a norm smaller than a universal constant. The smallness on the norm of $V$ is required also in analogous results for the fractional laplacian \cite{seo2015unique} and for the laplacian \cite{stein1985appendix} and is analogous to the condition \eqref{eq:sharpness} in \Cref{thm:DCOKY}.
\end{remark}

The matter of the optimality of the constant $\frac12$ in \eqref{eq:sharpness} is investigated in \cite{de1999strong}: the Authors show that the best constant for the validity of \Cref{thm:DCOKY} can not be bigger than $1$, producing an example  in $\R^2$ of a (non-trivial) function $u$ vanishing at infinite order at the origin and verifying \eqref{eq:main} with $C>1$ arbitrarily close to $1$. To do so, they adapt an example by Alinhac and Baouendi \cite{alinhac1994counterexample} related to properties of strong unique continuation for the Laplacian. 
In the following theorem, main result of this paper, we construct a (non-trivial) solution to \eqref{eq:main} in $\R^2$, showing so that in this case the best constant in  \eqref{eq:sharpness} of Theorem \ref{thm:DCOKY} is indeed $\frac12$.
\begin{theorem}\label{thm:main}
For all $C>\frac12$ there exists a (non-trivial)  function $u \in H^1(\R^2;\C^2) \cap C^\infty(\R^2 \setminus \{0\};\C^2)$ vanishing at infinite order at the origin as in \eqref{eq:vanishing.infinite.order.origin} such that \eqref{eq:main} holds true in $\R^2\setminus\{0\}$.
\end{theorem}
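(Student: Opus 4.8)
The plan is to build the counterexample in polar coordinates $z = r e^{i\theta}$ using the explicit form \eqref{eq:defn.D} of $\D_{2,0}$, exploiting that $\D_{2,0}$ decouples into a $\bar\partial$-equation and a $\partial$-equation on the two spinor components. Writing $u = (u_1, u_2)^T$, the inequality \eqref{eq:main} with $m=0$ reads $4|\partial_z u_2|^2 + 4|\partial_{\bar z} u_1|^2 \leq (C^2/r^2)(|u_1|^2 + |u_2|^2)$. I would look for a solution of the sharper \emph{equation} $-2i\partial_z u_2 = (c/r)\beta(\theta)u_1$, $-2i\partial_{\bar z} u_1 = (c/r)\gamma(\theta)u_2$ for suitable bounded functions, or — more cleanly — take a single angular mode. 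The natural ansatz is $u_1 = f(r) e^{ik\theta}$, $u_2 = g(r) e^{i(k+1)\theta}$ with integer $k$, since $\partial_{\bar z}$ raises the angular frequency by one and $\partial_z$ lowers it by one. With this ansatz $-2i\partial_{\bar z}u_1 = -i e^{i(k+1)\theta}(f'(r) + \tfrac{k}{r}f(r)\cdot(\text{sign factor}))$ after carefully computing $\partial_{\bar z}$ in polar form; the point is that \eqref{eq:main} reduces to a system of two scalar ODEs in $r$ of the schematic form $|f' \pm \tfrac{k}{r}f| \lesssim \tfrac{C}{r}|g|$ and similarly with $f,g$ swapped.

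The key idea for beating the constant $1/2$ (rather than just $1$, as in \cite{de1999strong}) is to choose $f$ and $g$ with different radial decay rates, so that the ``dangerous'' term is always controlled by the \emph{larger} of $|f|,|g|$. Concretely I expect to take, for some fixed integer $k\geq 1$ and small exponent choices, $f(r) = r^{a} \phi(r)$ and $g(r) = r^{b}\psi(r)$ where $\phi,\psi$ are flat at the origin (e.g.\ involving $\exp(-1/r^{\delta})$ or a $\log$-type correction) to force the infinite-order vanishing \eqref{eq:vanishing.infinite.order.origin}, and then check that the ratio $|f|/|g|$ and $|g|/|f|$ stay bounded by something that makes $C$ come out just above $1/2$. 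The heuristic: a pure power $r^a e^{ik\theta}$ in the first component paired with $r^{a-1}$-type behavior in the second lets one of the two equations be satisfied with constant exactly $|k|$ or $|k\mp 1|$ while the other is satisfied with a much smaller constant; optimizing over $k$ and the splitting of a unit ``charge'' between the two components yields the infimum $1/2$ (attained as $k=1$ with the $-2i\partial_z$ and $-2i\partial_{\bar z}$ blocks sharing the load symmetrically, giving effective constant $1/2$ from $|{\pm}\tfrac12|$). I would present the final $u$ explicitly, e.g.\ of the form $u = (r^{s}e^{i\theta}h(r),\, i\, r^{s}h(r))^T$ or a close variant, with $h$ chosen flat at $0$ and compactly supported or Schwartz so that $u \in H^1(\R^2;\C^2)\cap C^\infty(\R^2\setminus\{0\})$.

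After the ansatz is fixed, the remaining steps are routine verifications: (i) compute $\D_{2,0}u$ in closed form using \eqref{eq:defn.D} and the polar expressions for $\partial_z,\partial_{\bar z}$; (ii) bound $|\D_{2,0}u|$ pointwise by $\tfrac{C}{r}|u|$ with the claimed $C>\tfrac12$, which reduces to an elementary inequality between the radial profiles and their derivatives — here a cutoff and the flatness of $h$ make the error terms (coming from $h'/h$) harmless away from a fixed annulus, and on that annulus one simply absorbs constants; (iii) check $u \in H^1(\R^2;\C^2)$ and smoothness away from $0$, immediate from the choice of $h$; (iv) check \eqref{eq:vanishing.infinite.order.origin}, immediate from $|u(x)|^2 \lesssim |x|^{2s}|h(|x|)|^2$ with $h$ flat at the origin. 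The main obstacle is step (i)–(ii): getting the polar form of $\partial_z$ and $\partial_{\bar z}$ exactly right (the factors $e^{\pm i\theta}$ and the $\pm\tfrac{1}{2r}\partial_\theta$ terms) and then identifying the precise angular mode $k$ and the precise ratio between the two components that drives the constant down to the threshold $1/2$ rather than to $1$; everything downstream is bookkeeping. One subtlety to flag: \eqref{eq:main} must hold on all of $\R^2\setminus\{0\}$, so the global cutoff introduces a region where $\D_{2,0}u \neq 0$ but $u\neq 0$ as well, and one must confirm the inequality survives there with the same $C$ — this is where choosing $h$ to be, say, identically $1$ near $0$ and the transition to occur on a fixed annulus keeps the perturbation under control.
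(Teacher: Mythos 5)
Your ansatz of a single fixed angular mode $u_1 = f(r)e^{ik\theta}$, $u_2 = g(r)e^{i(k+1)\theta}$ cannot work, and this is the central gap. With this ansatz the inequality \eqref{eq:main} reduces to
\begin{equation*}
\abs*{g' + \tfrac{k+1}{r}g}^2 + \abs*{f' - \tfrac{k}{r}f}^2 \le \frac{C^2}{r^2}\left(|f|^2+|g|^2\right),
\end{equation*}
which gives $|f'|+|g'| \le \frac{M}{r}(|f|+|g|)$ for some $M=M(k,C)$; a Gronwall argument in $\log r$ then yields $(|f|+|g|)(r) \ge (r/r_0)^{M}(|f|+|g|)(r_0)$, i.e.\ at most \emph{finite} (polynomial) order of vanishing at the origin. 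So a nontrivial single-mode solution can never satisfy \eqref{eq:vanishing.infinite.order.origin}. Concretely, your two requirements on $h$ are incompatible: if $h$ is genuinely flat at $0$ (e.g.\ contains a factor $\exp(-1/r^{\delta})$), then $|h'/h|\sim r^{-1-\delta}\gg C/r$ and this logarithmic derivative cannot be absorbed by $\frac{C}{r}|u|$ --- both components suffer the same blow-up, so neither can ``carry'' the other; if instead $h\equiv 1$ near $0$, then $|u|\sim r^{s}$ vanishes only to finite order and \eqref{eq:vanishing.infinite.order.origin} fails. (Flatness of type $\exp(-c/r^{2\gamma-2})$ is precisely what forces a potential of size $r^{-\gamma}$ with $\gamma>1$; that is the content of \Cref{theo:gamma} and \Cref{theo:gamma.example}, not of the present theorem.)

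The construction in the paper therefore necessarily uses an infinite sequence of angular modes. On annuli $\{\rho_{k+1}\le|z|\le\rho_k\}$ with $\rho_k=\exp(-e^{k^2})$, the solution is glued from the exact null solution $\mathbf{E}_k$ (one spinor component equal to $z^k$ or $\bar z^k$) to $\mathbf{E}_{k+1}$, so that $|u|\le 2|z|^k$ on the $k$-th annulus and the infinite-order vanishing follows from $k\to\infty$. Your intuition about ``sharing the load between the two components'' is the right one for the constant $\tfrac12$, and the paper implements it precisely: raising the vanishing order from $k$ to $k+1$ costs a factor $|z|$, whose logarithmic derivative is $\tfrac1r$; splitting it as $|z|^{1/2}\cdot|z|^{1/2}$, applying one half-power to each component in separate sub-annuli, and exploiting that an anti-diagonal potential $\left(\begin{smallmatrix}0&a\\ b&0\end{smallmatrix}\right)$ has operator norm $\max(|a|,|b|)$ rather than $|a|+|b|$, makes each step cost only $\tfrac{1}{2r}$ up to errors of size $\delta/r$. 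The hand-off between the two components is nearly free because the annuli are doubly-exponentially wide. Without this staircase of modes the required combination of infinite-order flatness and the bound $\frac{C}{|z|}$ cannot be achieved.
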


We complement \Cref{thm:DCOKY} with the description of properties of unique continuation for differential inequalities with a more singular behaviour of $\D_n u$ in the origin. 
\begin{theorem}\label{theo:gamma}
Let $n\geq 2$, $\rho>0$ and $\Omega_\rho := \{x\in\R^n :  |x| < \rho\}$. Let $u \in H_{loc}^1(\Omega_\rho;\C^N)$ such that 
\begin{equation}\label{eq:gamma}
\abs{\D_n u (x) } \leq \frac{C}{\abs{x}^{\gamma}} \abs{u(x)} \quad \text{ in }\Omega_\rho\setminus\{ 0 \},
\end{equation}
for $C>0$ and $\gamma>1$. Then, there exists $\tau_0>0$ such that if
\begin{equation}\label{eq:theo.power.gamma}
\exp\left[\frac{\tau}{\abs{x}^{2\gamma-2}}\right]u \in L^2 (\Omega_\rho;\C^N) \quad \text{ for all }\tau>\tau_0 
\end{equation}
then $u$ vanishes in a neighborhood of $0$.
\end{theorem}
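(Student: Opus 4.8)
The plan is to reduce the differential inequality to an ordinary differential inequality for a suitable $L^2$-type "frequency function" and then exploit the super-exponential weight hypothesis to force vanishing. First I would set $v := \D_n u$, so that $|v(x)| \le C|x|^{-\gamma}|u(x)|$, and use the factorization $\D_n^2 = -\Delta\,\mathbb{I}_N$ together with the identity $-\Delta u = \D_n v$ to write down an equation of the form $-\Delta u = f$ with $|f| \le C|x|^{-\gamma}|u| + |\nabla(\text{something})|$; more cleanly, I would work directly with $\D_n$ by introducing polar coordinates $x = r\omega$, $r = |x|$, $\omega \in S^{n-1}$, and writing $\D_n = \alpha\cdot\omega\,(\partial_r + \tfrac{1}{r}\D_{S^{n-1}})$ in the standard way (here $\D_{S^{n-1}}$ is the spherical Dirac operator, whose eigenvalues are bounded away from zero by $\tfrac{n-1}{2}$ in absolute value). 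Since $\alpha\cdot\omega$ is unitary, \eqref{eq:gamma} becomes $|(\partial_r + \tfrac1r \D_{S^{n-1}})u| \le C r^{-\gamma}|u|$ on $\Omega_\rho\setminus\{0\}$.

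The core of the argument is a Carleman estimate with weight $\exp[\tau\phi(r)]$, where $\phi(r) = r^{2-2\gamma}/(2\gamma-2)$ is chosen so that $-\phi'(r) = r^{1-2\gamma}$ decays one power faster than $r^{-\gamma}$ is large — precisely the balance that makes $\gamma > 1$ a favorable (subcritical from above) regime. Concretely, I expect to prove: there exist $\tau_0, c > 0$ such that for all $\tau > \tau_0$ and all $w \in C_c^\infty(\Omega_\rho\setminus\{0\};\C^N)$,
\[
\tau \int_{\Omega_\rho} r^{1-2\gamma} \exp\!\left[\frac{2\tau}{r^{2\gamma-2}}\right] |w|^2\,dx \;\le\; c \int_{\Omega_\rho} \exp\!\left[\frac{2\tau}{r^{2\gamma-2}}\right] |\D_n w|^2\,dx.
\]
The derivation follows the usual route: conjugate $\D_n$ by the weight, $\D_n^\tau := e^{\tau\phi}\D_n e^{-\tau\phi} = \D_n - i\tau\phi'(r)\,\alpha\cdot\omega$, split into (skew-)symmetric parts, expand $\|\D_n^\tau w\|^2$ and estimate the cross term $2\Re\langle \D_n w, -i\tau\phi'\,\alpha\cdot\omega w\rangle$ by integration by parts in $r$; the positivity of the commutator term is driven by $-\partial_r(\tau\phi') = \tau(2\gamma-2)r^{1-2\gamma}$, and since $2\gamma - 2 > 0$ this has the right sign. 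The zeroth-order spherical terms, bounded by $(n-1)/(2r)$ times the weight, are absorbed into the dominant $\tau r^{1-2\gamma}$-term for $r < \rho$ small and $\tau$ large, because $r^{1-2\gamma} \gg r^{-1}$ as $r \to 0$.

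Once the Carleman inequality is in hand, I would apply it to $w = \chi u$ where $\chi$ is a cutoff equal to $1$ on $\{|x| < \rho/2\}$ and vanishing near $\partial\Omega_\rho$. The commutator $[\D_n,\chi]$ is supported in the annulus $\{\rho/2 \le |x| \le \rho\}$, where the weight $\exp[2\tau r^{2-2\gamma}]$ is bounded by $\exp[2\tau(\rho/2)^{2-2\gamma}]$, a constant independent of where $r^{1-2\gamma}$ is large near the origin. Plugging \eqref{eq:gamma} into the right-hand side, $\int \exp[2\tau\phi]|\D_n(\chi u)|^2 \le 2C^2\int r^{-2\gamma}\exp[2\tau\phi]|u|^2 + (\text{annulus term})$, and since $r^{-2\gamma} = r^{1-2\gamma}\cdot r^{-1} \le (\rho)^{-1} r^{1-2\gamma}$... wait, that's the wrong direction; rather I use that on the region of interest $C^2 r^{-2\gamma} \le \tfrac{\tau}{2} r^{1-2\gamma}$ once $\tau \ge 2C^2 r^{-1}$, which needs care — so instead I keep $r^{-2\gamma}$ and observe $r^{-2\gamma} = r^{-1}\cdot r^{1-2\gamma}$ is \emph{not} dominated; the correct move is to note $\int r^{-2\gamma}e^{2\tau\phi}|u|^2$ must be absorbed, which works only if $C^2 \le \tfrac{\tau}{2} r^{2\gamma-1}\cdot r^{1-2\gamma}\cdot r^{\ldots}$. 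Here the hypothesis \eqref{eq:theo.power.gamma} enters decisively: because $\exp[\tau' r^{2-2\gamma}]u \in L^2$ for \emph{all} $\tau' > \tau_0$, the left side's growth in $\tau$ beats any fixed power, so for $\tau$ large enough the absorption \[C^2\int_{\{r<\delta\}} r^{-2\gamma}e^{2\tau\phi}|u|^2 \le \tfrac{\tau}{2}\int_{\{r<\delta\}} r^{1-2\gamma}e^{2\tau\phi}|u|^2\] holds on a small ball $\{r < \delta(\tau)\}$ while the complementary region $\{\delta \le r \le \rho\}$ contributes a $\tau$-independent bound; letting $\tau \to \infty$ forces $u \equiv 0$ on $\{r < \delta_0\}$ for some fixed $\delta_0$. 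I would handle the singularity at $0$ (where $\chi u$ is not compactly supported away from the origin) by a second cutoff $\eta_\varepsilon$ vanishing near $0$ and checking, using $u \in H^1_{loc}$, that the error terms from $[\D_n,\eta_\varepsilon]$ vanish as $\varepsilon \to 0$ — this is routine but requires the weight to be integrable against $|u|^2$ and $|\nabla u|^2$ near $0$, which again is exactly what \eqref{eq:theo.power.gamma} provides.

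The main obstacle I anticipate is the precise bookkeeping in the absorption step: making rigorous the claim that the super-exponential integrability \eqref{eq:theo.power.gamma} lets one absorb the full $C^2 r^{-2\gamma}$ perturbation into the Carleman gain $\tau r^{1-2\gamma}$ — these differ by a factor $r^{-1}$, so absorption is \emph{not} pointwise and must be done by splitting into $\{r < \delta\}$ and $\{r \ge \delta\}$ and optimizing $\delta = \delta(\tau)$, using that $\int_{\{r<\delta\}} e^{2\tau\phi}|u|^2$ is finite and controlled via Hölder/monotonicity against the stronger weight $e^{2\tau'\phi}$ with $\tau' > \tau$. A secondary technical point is upgrading the Carleman estimate from $C_c^\infty(\Omega_\rho\setminus\{0\})$ to $w = \chi\eta_\varepsilon u \in H^1$ by density, which is standard given the self-adjointness of $\D_n$ recorded in the excerpt. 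No new harmonic analysis is needed beyond the classical conjugated-operator Carleman scheme; the value $\gamma > 1$ only ever enters through the sign $2\gamma - 2 > 0$ and the domination $r^{1-2\gamma} \gg r^{-1}$ near $0$.
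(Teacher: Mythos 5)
Your route is genuinely different from the paper's: the paper proves this theorem with no Carleman estimate at all, applying the Dirac-adapted Kelvin transform $u\mapsto u_\Kel$ of \Cref{lem:Kelvin}, under which \eqref{eq:gamma} becomes $\abs{\D_n\psi(x)}\le C\abs{x}^{-(2-\gamma)}\abs{\psi(x)}$ on $\{\abs{x}>\rho\}$ with $2-\gamma<1$ and the weight $\exp[\tau\abs{x}^{-(2\gamma-2)}]$ becomes $\exp[\tau\abs{x}^{2\gamma-2}]$, and then reading the conclusion off from the unique-continuation-at-infinity theorem of \cite{cassano2022sharp}. Your plan amounts to re-proving that input result directly near the origin; that is legitimate in principle, but it is exactly where the real work lies, and your sketch has a genuine gap at the step you yourself flag.

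The Carleman inequality you propose, with gain $\tau\int r^{1-2\gamma}e^{2\tau\phi}\abs{w}^2$ on the left, cannot absorb the potential contribution $C^2\int r^{-2\gamma}e^{2\tau\phi}\abs{u}^2$: the ratio of the integrands is $C^2/(\tau r)$, which blows up as $r\to0$ for every fixed $\tau$, and no splitting into $\{r<\delta(\tau)\}$ and $\{r\ge\delta(\tau)\}$ repairs this --- on the inner region the term to be absorbed is pointwise \emph{larger} than the term you keep, and the finiteness of $\int e^{2\tau'\phi}\abs{u}^2$ for larger $\tau'$ provides no smallness there (moreover, if $\delta(\tau)\to0$ the set on which you conclude $u\equiv0$ shrinks to nothing). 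The reason $\gamma>1$ is the correct hypothesis is a different domination: the square of the antisymmetric part of the conjugated operator, $\tau^2\abs{\phi'}^2=\tau^2r^{2-4\gamma}$, dominates $C^2r^{-2\gamma}$ pointwise near the origin because $2-4\gamma<-2\gamma$ exactly when $\gamma>1$. So the estimate you need carries $\tau^2\int r^{2-4\gamma}e^{2\tau\phi}\abs{w}^2$ on the left, and proving it requires showing that $\norm{Aw}^2$ survives the cross terms with the symmetric part; your commutator bookkeeping does not deliver this as written. Indeed $-\partial_r(\tau\phi')=-\tau\phi''=-\tau(2\gamma-1)r^{-2\gamma}$, which has both the opposite sign and a different power of $r$ from the $\tau(2\gamma-2)r^{1-2\gamma}$ you assert; the full radial commutator is of size $\tau(2\gamma-n)r^{-2\gamma}$, whose sign is not even favorable for $n>2\gamma$, which is precisely why one must fall back on the $\tau^2\abs{\phi'}^2$ term (or the spectral gap of the spherical Dirac operator) rather than on commutator positivity. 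Two secondary points: \eqref{eq:theo.power.gamma} controls $e^{\tau\phi}u$ in $L^2$ but not $e^{\tau\phi}\nabla u$, so removing the inner cutoff $\eta_\varepsilon$ requires a Caccioppoli-type estimate on annuli rather than a direct appeal to the hypothesis; and your final limiting argument must fix the radius of the vanishing neighborhood independently of $\tau$ before sending $\tau\to\infty$.
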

The exponent $2\gamma -2$ in \eqref{eq:theo.power.gamma} is sharp in $\R^n$ for $n=2,3$. Indeed, in the following theorem we provide examples of (non-trivial) solutions to \eqref{eq:gamma} that have the prescripted decay in the origin.
\begin{theorem}\label{theo:gamma.example}
For all $\gamma>1$ and $n\in\{2,3\}$, there exist a (non-trivial) function $u \in H^1(\R^n;\C^N) \cap C^\infty(\R^n\setminus\{0\};\C^N)$ such that for all $x \in \R^n\setminus\{0\}$
\begin{align}
\label{eq:gamma.sharp.u}
\abs{u(x)} & \leq C_1 \exp\left[-\frac{C_2}{|x|^{2\gamma-2}}\right], \\
\label{eq:gamma.sharp.V}
\abs*{\D_n u(x)} & \leq
\begin{cases}
C_3 |x|^{-\gamma} \abs{u(x)} \quad &\text{ for }n=2, \\
C_3 |x|^{-\gamma} \abs*{\log\abs{x}}^3 \abs{u(x)} \quad &\text{ for }n=3,
\end{cases}
\end{align}
for some $C_1,C_2,C_3>0$.
\end{theorem}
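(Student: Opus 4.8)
The plan is to build $u$ explicitly in the spirit of the example for Theorem~\ref{thm:main}, but now with a Gaussian-type profile $\exp[-C_2/|x|^{2\gamma-2}]$ replacing the power-type decay, and to exploit the algebraic structure $\D_n^2 = -\Delta$. First I would treat the case $n=2$. Using the representation \eqref{eq:defn.D} with $m=0$, a spinor $u = (u_1,u_2)^T$ satisfies $\D_2 u = 0$ iff $\partial_{\bar z} u_1 = 0$ and $\partial_z u_2 = 0$, i.e. $u_1$ holomorphic and $u_2$ antiholomorphic; but such functions cannot have the required vanishing at the origin without being trivial, so instead I would look for $u$ of the form $u = (f(r)g(\theta), h(r)k(\theta))^T$ in polar coordinates $z = r e^{i\theta}$, with $f,h$ essentially $\exp[-C_2 r^{-(2\gamma-2)}]$ times a power of $r$, and compute $\D_2 u$ directly. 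The key identities are $2\partial_z = e^{-i\theta}(\partial_r - \tfrac{i}{r}\partial_\theta)$ and $2\partial_{\bar z} = e^{i\theta}(\partial_r + \tfrac{i}{r}\partial_\theta)$. Choosing the angular parts to be $e^{ik\theta}$ for suitable integers so the operator acts cleanly, one gets $\D_2 u$ expressed through $f' + (\text{something})/r$ and $h' + (\text{something})/r$; since $f' \sim C_2(2\gamma-2) r^{-(2\gamma-1)} f$, the dominant term is of size $r^{-\gamma} \cdot r^{-(\gamma-1)} \abs{u} \cdot (\text{const})$... — here I need to be careful: $r^{-(2\gamma-1)} = r^{-\gamma}\cdot r^{-(\gamma-1)}$, and for $\gamma>1$ this is \emph{larger} than $r^{-\gamma}$, so a single smooth radial profile will not satisfy \eqref{eq:gamma.sharp.V}. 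The resolution, as in Alinhac--Baouendi-type constructions, is to introduce oscillation: take $u$ with a phase factor $\exp[i\, b(r)]$ where $b(r) \sim r^{-(\gamma-1)}$, so that the large radial derivative of the amplitude is cancelled against the derivative of the phase, leaving a remainder of the admissible size $r^{-\gamma}\abs{u}$.

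Concretely, I would set $u(x) = \chi(|x|)\, \exp[-\phi(|x|)]\, v(x)$ where $\phi(r) = C_2 r^{-(2\gamma-2)} - i\, C_4 r^{-(\gamma-1)}$ has a real part giving the decay \eqref{eq:gamma.sharp.u} and an imaginary part producing the compensating oscillation, $\chi$ is a cutoff making $u \in H^1(\R^n)$ globally, and $v$ is a fixed smooth nonvanishing spinor-valued function (a constant spinor, or a low-degree spherical-harmonic-type profile times a constant spinor) chosen so that the leading term of $\D_n(\exp[-\phi]v)$ vanishes. Computing $\D_n(e^{-\phi}v) = e^{-\phi}(-i\alpha\cdot\nabla v - i \alpha\cdot(\nabla(-\phi)) v) = e^{-\phi}(-i\alpha\cdot\nabla v + i\phi'\,(\alpha\cdot\hat x)\, v)$, and using that $(\alpha\cdot\hat x)^2 = \I_N$ so $\alpha\cdot\hat x$ has eigenvalues $\pm1$: picking $v$ in the $(+1)$-eigenspace of $\alpha\cdot\hat x$ at each point — but that eigenspace depends on $\hat x$, so $v$ is not constant and $\nabla v$ contributes. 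The cleanest route is to pick the \emph{imaginary} part of $\phi'$ to kill the eigenvalue-$(\pm1)$ part of $i\phi'(\alpha\cdot\hat x)v$ up to lower order: write $v = v_+ + v_-$ in the eigenspaces of $\alpha\cdot\hat x$; then $i\phi'(\alpha\cdot\hat x)v = i\phi'(v_+ - v_-)$, and since $\phi'$ is complex we cannot make this vanish, but we can arrange that $|\D_n u| \lesssim (|\phi''| + |\phi'|/r + |\nabla v|/|v|\cdot 1)\,|u|$ and then check each term: $|\phi''| \sim r^{-2\gamma}$, which is \emph{still too big}. So genuinely the mechanism must be cancellation between $-i\alpha\cdot\nabla v$ and $i\phi'(\alpha\cdot\hat x)v$ at top order, which forces $v$ to rotate in the spinor bundle at a rate matching $\phi'$; the angular derivative of such a rotating $v$ produces a term of size $|\phi'|/r \sim r^{-\gamma}$, exactly matching the right-hand side of \eqref{eq:gamma.sharp.V}. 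For $n=2$ this rotation is explicit via the $e^{ik\theta}$ ansatz above; for $n=3$ one uses spherical coordinates, and the extra $\abs{\log\abs{x}}^3$ factor absorbs the imperfect cancellation caused by the curvature of $S^2$ (the angular Laplacian eigenfunctions cannot be tuned as freely as the circle's).

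The main obstacle, then, is the explicit construction of the ``rotating'' spinor profile $v$ and the verification that after the designed cancellation the genuine remainder is bounded by $C_3 r^{-\gamma}\abs{u}$ (resp.\ $C_3 r^{-\gamma}\abs{\log r}^3\abs{u}$) — i.e.\ identifying exactly which terms survive and showing they are no worse than $r^{-\gamma}$ times the amplitude. In practice I would: (1) write the ansatz $u = \chi(r) r^{a} e^{-C_2 r^{-(2\gamma-2)}} e^{i\psi(r,\omega)} w(\omega)$ with $\omega = x/|x| \in S^{n-1}$, $w$ a spinor field on the sphere, $\psi$ a real phase; (2) compute $\D_n u$ using $\D_n = -i(\alpha\cdot\omega)\big(\partial_r + \tfrac1r \alpha\cdot\omega\,(\alpha\wedge\nabla_\omega)\big)$-type polar decomposition; (3) demand that the $r^{-(2\gamma-1)}$ terms (from $\partial_r$ hitting the Gaussian) cancel against $\partial_r \psi$ terms, fixing $\psi \sim r^{-(\gamma-1)}$; (4) demand the next-order $r^{-2\gamma}$ terms cancel, which is automatic for $n=2$ with the $e^{ik\theta}$ choice and costs the logarithmic factor for $n=3$; (5) collect the surviving terms, all of order $r^{-\gamma}\abs{u}$ (up to logs), establishing \eqref{eq:gamma.sharp.V}; (6) check $u \in H^1(\R^n;\C^N)$ using the Gaussian decay at $0$ and the cutoff at $\infty$, and $u \in C^\infty(\R^n\setminus\{0\})$ by inspection. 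The delicate bookkeeping is step (4)–(5); everything else is routine once the ansatz is pinned down.
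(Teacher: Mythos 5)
The paper does not construct $u$ at the origin at all: it takes the Meshkov-type solution $\psi$ \emph{at infinity} provided by \cite[Theorem 1.5]{cassano2022sharp} (with $\epsilon=2-\gamma$), satisfying $\abs{\psi(x)}\le C_1e^{-C_2|x|^{2\gamma-2}}$ and $\abs{\D_n\psi(x)}\le C_3|x|^{-(2-\gamma)}\abs{\psi(x)}$ (with the $(\log|x|)^3$ factor for $n=3$), and pulls it back to the origin via the Dirac-adapted Kelvin transform \eqref{eq:defn.Kelvin}, using \Cref{lem:Kelvin} to turn the inequality at infinity into \eqref{eq:gamma.sharp.V} and \Cref{lem:Hloc1.origin} for the $H^1$ regularity across the origin. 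Your plan is to redo the underlying construction from scratch near the origin; that is legitimate in principle (it is essentially what the cited theorem does, modulo inversion), but as written it contains scaling errors and omits the part that actually constitutes the proof.

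Concretely: (a) a radial phase cannot do the job for a first-order operator. Writing $u=e^{-\phi+ib}v$ with $\phi,b$ radial gives $\D_n u=e^{-\phi+ib}\bigl(-i\alpha\cdot\nabla v+i(\phi'-ib')(\alpha\cdot\tfrac{x}{|x|})v\bigr)$, and since $\alpha\cdot\tfrac{x}{|x|}$ is unitary the second term has modulus $\sqrt{(\phi')^2+(b')^2}\,\abs{v}\ge\abs{\phi'}\abs{v}\sim r^{-(2\gamma-1)}\abs{v}$: adding an imaginary part to the exponent can only increase this, unlike for the Laplacian where $\Realpart\abs{\nabla(-\phi+i\psi)}^2=(\phi')^2-(\psi')^2$ can vanish (the Alinhac--Baouendi/Meshkov mechanism you are importing). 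Moreover your $b(r)\sim r^{-(\gamma-1)}$ has $b'\sim r^{-\gamma}$, too small by a factor $r^{\gamma-1}$ to interact with $\phi'$ in any case. The cancellation must come, as you say at the end, from the angular rotation of the spinor, with winding number $k(r)$ satisfying $k(r)/r\sim\phi'(r)$, i.e.\ $k(r)\sim r^{-(2\gamma-2)}$; and your estimate of the leftover as $\abs{\phi'}/r\sim r^{-\gamma}$ is dimensionally wrong ($\abs{\phi'}/r\sim r^{-2\gamma}$). (b) The real gap: $k$ must be an integer, so it cannot vary smoothly with $r$. One must keep it constant on annuli $\rho_{j+1}\le|x|\le\rho_j$ shrinking to the origin --- where for $n=2$ one can take $u$ exactly in the kernel of $\D_n$, e.g.\ proportional to the $\mathbf{E}_k$ of \eqref{eq:defn.Ek} --- and increment it in transition layers; the theorem then reduces to choosing the $\rho_j$ and the interpolants so that the potential created in each transition layer is only $O(|x|^{-\gamma})$ (and tracking the $\abs{\log\abs{x}}^3$ loss for $n=3$, where the angular profiles on $S^2$ cannot be tuned as freely). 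This annulus-by-annulus gluing and the size estimate for the transition potentials is the entire content of the construction (compare the structure of \Cref{sec:controesempio}); deferring it as ``delicate bookkeeping'' leaves the statement unproven. If you want the short route, prove or quote the analogous statement at infinity and conjugate by the Kelvin transform, as the paper does.
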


In the proof of \Cref{theo:gamma} and \Cref{theo:gamma.example}, we use a Kelvin transformation adapted to the Dirac operator (see the following \eqref{eq:defn.Kelvin.transformation}) to exploit analogous results of unique continuation from the point at infinity in \cite{cassano2022sharp}, where a version of the Landis' problem was investigated for the Dirac operator. In the last result of this paper, using the strategy of the proof of \Cref{theo:gamma} and \Cref{theo:gamma.example}, we use \Cref{thm:DCOKY} and \Cref{thm:main} to show the following sharp result of strong unique continuation from the point at infinity, so complementing the results in \cite{cassano2022sharp}.

\begin{theorem}\label{thm:SUCinf}
Let $n\geq 2$, $\rho>0$ and $\widetilde\Omega_\rho:= \{ x \in \R^n: |x| > \rho\}$. Let $ u \in H^1_{loc} (\widetilde\Omega_\rho ; \C^N)$ be a solution to the differential inequality
\begin{equation}\label{eq:main.inf}
|\D_n u(x)| \leq \frac{C}{|x|}| u(x) |  \quad \text{ in } \widetilde\Omega_\rho,
\end{equation}
for  $C \in \left(0,\frac12 \right)$,  and assume that $u$ vanishes at infinite order at infinity, that is
\begin{equation}\label{eq:vanishing.infinite.order.inf}
\lim_{R\to +\infty} R^k \int_{\{|x|>R\}} |u(x)|^2 \, dx = 0, \quad \text{ for all }k \in \N.
\end{equation}
Then, $u$ has compact support. 
Moreover, for any $C>\frac12$ there exists  $C^\infty(\R^2;\C^2)$ not compactly supported and vanishing at infinite order at the infinity as in \eqref{eq:vanishing.infinite.order.inf} such that \eqref{eq:main.inf} holds true.
\end{theorem}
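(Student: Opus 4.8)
The plan is to transport both halves of the statement to the origin by means of the Kelvin transform $\K$ adapted to the Dirac operator, introduced in \eqref{eq:defn.Kelvin.transformation}. The facts I would use are: $\K$ is an involution that maps $\widetilde\Omega_\rho=\{|x|>\rho\}$ onto the punctured ball $\Omega_{1/\rho}\setminus\{0\}$ and acts smoothly on $\R^n\setminus\{0\}$; pointwise $|\K u(x)|=|x|^{1-n}|u(x/|x|^2)|$, because the matrix factor appearing in $\K u(x)$ is $|x|^{1-n}$ times a unitary matrix (here one uses \eqref{eq:anticommutation}); and --- the structural heart of the argument --- the differential inequality $|\D_n u|\leq \frac{C}{|x|}|u|$ is \emph{conformally covariant} under $\K$, i.e.\ $u$ satisfies it on $\widetilde\Omega_\rho$ if and only if $\K u$ satisfies it on $\Omega_{1/\rho}\setminus\{0\}$ \emph{with the same constant $C$}. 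This last point, the first‑order analogue of the Kelvin invariance of $\Delta$, is precisely the scaling criticality of the weight $|x|^{-1}$ emphasised in the introduction, and is where \eqref{eq:anticommutation} enters in an essential way.

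For the first assertion, set $v:=\K u$ on $\Omega_{1/\rho}\setminus\{0\}$, so that $|\D_n v(x)|\leq \frac{C}{|x|}|v(x)|$ there with $C<\frac12$. The change of variables $y=x/|x|^2$ (for which $dx=|y|^{-2n}\,dy$ and $\{|x|<R\}\leftrightarrow\{|y|>1/R\}$) gives, using the modulus identity above,
\begin{equation*}
\int_{\{|x|<R\}}|v(x)|^2\,dx=\int_{\{|y|>1/R\}}\frac{|u(y)|^2}{|y|^2}\,dy\ \leq\ R^2\int_{\{|y|>1/R\}}|u(y)|^2\,dy,
\end{equation*}
and by \eqref{eq:vanishing.infinite.order.inf} (with $1/R\to\infty$) the right‑hand side is $o(R^k)$ for every $k$: hence $v$ vanishes at infinite order at the origin in the sense of \eqref{eq:vanishing.infinite.order.origin}. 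Inserting the extra weight $|x|^{-2}$ into the same computation yields $\int_{\{|x|<\delta\}}|x|^{-2}|v|^2\,dx=\int_{\{|y|>1/\delta\}}|u(y)|^2\,dy<\infty$ for $\delta$ small, so $v\in L^2(\Omega_\delta)$ and $\D_n v\in L^2(\Omega_\delta\setminus\{0\})$; since $\D_n^2=-\Delta\,\mathbb{I}_N$ is elliptic and a point is removable for $H^1$ in dimension $n\geq 2$, we get $v\in H^1_{loc}(\Omega_\delta)$, the singularity at the origin included. Now \Cref{thm:DCOKY} applies to $v$ on $\Omega_\delta$ and forces $v\equiv 0$ on $\Omega_\delta$; transporting back, $u(y)=0$ for $|y|>1/\delta$, i.e.\ $u$ has compact support.

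For the sharpness, fix $C>\frac12$ and let $w\in H^1(\R^2;\C^2)\cap C^\infty(\R^2\setminus\{0\})$ be the non‑trivial function furnished by \Cref{thm:main}: it vanishes at infinite order at the origin, satisfies $|\D_2 w|\leq\frac{C}{|x|}|w|$ on $\R^2\setminus\{0\}$, and, being a genuine witness of the sharpness of $\frac12$, does not vanish in any punctured neighbourhood of $0$. Put $u:=\K w$. By the covariance, $|\D_2 u|\leq\frac{C}{|x|}|u|$ on $\R^2\setminus\{0\}$, in particular on every $\widetilde\Omega_\rho$; $u$ is $C^\infty$ on $\R^2\setminus\{0\}$ since $\K$ acts smoothly there; it does not vanish for arbitrarily large $|x|$ (because $w$ does not vanish near $0$), so it is not compactly supported; and running the change of variables in the other direction --- where now the infinite‑order vanishing of $w$ at $0$ is needed to absorb the singular weight $|y|^{-2}$, which one does by a dyadic decomposition of $\{|y|<1/R\}$ --- one checks that $u$ vanishes at infinite order at infinity as in \eqref{eq:vanishing.infinite.order.inf}. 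The one step that is more than bookkeeping for the Kelvin transform is the $C^\infty$‑regularity of $u$ \emph{at the origin}: this is equivalent to an appropriate decay of $w$ together with all its derivatives at infinity, and I would verify it directly from the explicit construction underlying \Cref{thm:main} (in particular it holds trivially if that construction is arranged so that $w$ vanishes near infinity, in which case $u=\K w$ vanishes near $0$). I expect this smoothness check to be the main technical obstacle; all the other steps are the routine bookkeeping of the transform.
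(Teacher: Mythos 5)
Your proposal is correct and takes essentially the same route as the paper: Kelvin-transform to a punctured ball, transfer the inequality (same constant, by scaling criticality of $|x|^{-1}$), verify infinite-order vanishing and the weighted $L^2$ bound by the change of variables, upgrade to $H^1$ across the origin (the paper's \Cref{lem:Hloc1.origin}), and apply \Cref{thm:DCOKY}; for sharpness, Kelvin-transform the function of \Cref{thm:main}. The smoothness issue you flag at the end is indeed resolved exactly as you anticipate: the construction in \Cref{thm:main} has $\supp u\subset\{|z|\leq 1\}$, so its Kelvin transform vanishes near the origin.
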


\begin{remark}
  In \Cref{thm:main}, \Cref{theo:gamma.example} and \Cref{thm:SUCinf}, the sharpness of the results is proven exhibiting potentials $\V \in C^\infty(\R^n\setminus\{0\};\C^{N\times N})$ such that $\D_n u = \V u$ and the required decay assumptions on  $u$ and $\V$ hold. The constructed potentials $\V$ are not symmetric: 
  it would be interesting to understand what could be the analogous to \Cref{thm:DCOKY}, \Cref{theo:gamma} and \Cref{thm:SUCinf} allowing only the presence of symmetric potentials. For an extension of \Cref{thm:DCOKY} in this direction, see \cite[Theorem 2.1]{de1999strong}.
\end{remark}
\subsection*{Structure of the paper}
\Cref{thm:main} is proven in \Cref{sec:controesempio}. \Cref{theo:gamma}, \Cref{theo:gamma.example} and  \Cref{thm:SUCinf} are proven in \Cref{sec:kelvin}.

\subsection*{Acknowledgments}
B.C.~acknowledges Nabile Boussa\"id and Piero D'Ancona for very fruitful discussions.
B.C.~is member of GNAMPA (INDAM) and has been partially supported by the PRIN 2022 project “Anomalies in partial differential equations and applications” CUP D53C24003370006 .

\section{Proof of \Cref{thm:main}}
\label{sec:controesempio}
In this section we make use of polar coordinates, writing $ z \in \C$ as $z  = r e^{i \theta}$, being $r = |z|>0$ and $\theta = \text{Arg}(z) \in (-\pi,\pi]$ its principal argument.
We recall that 
\begin{equation}\label{eq:partialz.in.radial}
\frac{\partial}{\partial z}  = \frac{e^{-i\theta}}{2} \left( \frac{\partial}{\partial r} - \frac{i}{r} \frac{\partial}{\partial \theta} \right), 
\quad
\frac{\partial}{\partial \bar{z}}  = \frac{e^{i\theta}}{2} \left( \frac{\partial}{\partial r} + \frac{i}{r} \frac{\partial}{\partial \theta} \right).
\end{equation}

In this section we simplify the notations and write $\D:=\D_{2,0}$.

Let $\epsilon>0$. We construct (non-trivial) functions $u \in H^1(\R^n;\C^N) \cap C_c^\infty(\R^2\setminus\{0\};\C^2)$ and $\V \in C^\infty(\R^2\setminus \{0\};\C^{2\times 2})$ such that for all $z \in \R^2\setminus\{0\}$
\begin{gather}
\notag
 \D u(z) = \V(z) u (z),\\
 \label{eq:decayV}
 |\V (z) |  = \sup_{w\in\C^2}\frac{\abs{\V(z) w}}{|w|} \leq \left(\frac12+\epsilon\right)\frac{1}{|z|}.
\end{gather}
The strategy of the proof is the
following:
\begin{itemize}
\item in \Cref{sec:preliminary}: we define the useful function $\chi_\delta$; we define the appropriate sequence $(\rho_k)_{k \in \N}$, such that  $\rho_k \searrow
  0$; 
  we define the functions $\mathbf{E}_k$;
\item in \Cref{sec:defn.annulus.2D}: for $k \geq k_0$, we define the functions $u_k$ and
  $\V_k$ in the annulus $\{\rho_{k+1} \leq \abs{z}\leq
  \rho_{k}\}$;
    \item in \Cref{sec:defn.infinity.2D}: we define the functions $u_0$ and $\V_0$ in   $\{\abs{z}\geq
  \rho_{k_0}\}$;  
\item  in \Cref{sec:defn.u.V.2D}: we define $u$ and $\V$ glueing together the
  functions $u_0, \V_0$ and  $u_k, \V_k$, for $k\geq k_0$; we check the behaviour in the origin of the function $u$.
\end{itemize}

\subsection{Preliminary definitions}\label{sec:preliminary}
In the following lemma we show the existence of a function with appropriate properties.
\begin{lemma}\label{lem:cut.off} 
Let $\delta \in (0,\frac14)$.
There exists $\chi_\delta \in C^\infty(\R)$ such that 
\begin{enumerate}[label=(\roman*)]
    \item\label{itm:1} $\chi_\delta(s) = 0$ for $s\leq 0$, $\chi_\delta(s) \in [0,1]$ for $s \in [0,1]$, $\chi_\delta(s) = 1$ for $s\geq 1$;
\item\label{itm:2} $\supp \chi_\delta' \subset [0,1]$, $\chi_\delta'(s) \geq 0$ for $s \in [0,1]$, $\norm{\chi_\delta'}_{L^\infty(\R)} = \chi_\delta'(\frac12) = 1+\delta$;
\item\label{itm:3} $\chi_\delta(s) \leq s$ for $s \in [0,\frac12]$, $\chi_\delta(\frac12) = \frac12$, $\chi_\delta(s) \leq (1+\delta) s -\frac{\delta}2$ for $s \in [\frac12,1]$.
\end{enumerate}
\end{lemma}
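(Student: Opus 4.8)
The plan is to obtain $\chi_\delta$ as a primitive of a bump function. I would set $\chi_\delta(s):=\int_{0}^{s}\psi(t)\,dt$, where $\psi\in C^\infty(\R)$ is nonnegative, supported in $[0,1]$, symmetric about $\tfrac12$, nondecreasing on $(-\infty,\tfrac12]$ (hence unimodal with peak at $\tfrac12$), with $\int_0^1\psi=1$ and $\max_\R\psi=\psi(\tfrac12)=1+\delta$. Granting such a $\psi$, properties \ref{itm:1} and \ref{itm:2} are immediate: $\chi_\delta$ is smooth, vanishes on $(-\infty,0]$ and equals $\int_0^1\psi=1$ on $[1,\infty)$, is nondecreasing from $0$ to $1$ and hence $[0,1]$-valued on $[0,1]$, while $\chi_\delta'=\psi$ has the prescribed support, sign and maximum. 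The symmetry of $\psi$ also gives $\chi_\delta(\tfrac12)=\int_0^{1/2}\psi=\tfrac12\int_0^1\psi=\tfrac12$, which is the middle assertion of \ref{itm:3}.

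To build $\psi$ I would match the prescribed mass $1$ and the prescribed peak $1+\delta$ simultaneously by means of a dilation centred at $\tfrac12$, which rescales the peak but leaves the integral untouched. Start from a fixed smooth $\varphi\ge0$, symmetric about $\tfrac12$, nondecreasing on $(-\infty,\tfrac12]$, supported in $[0,1]$, with $\int_0^1\varphi=1$ and maximum $M_0:=\varphi(\tfrac12)$ as close to $1$ as desired (for instance a suitably normalised mollification of $\mathbf{1}_{[\eta,1-\eta]}$, whose maximum tends to $1$ as $\eta\to0^+$), so that $M_0\le 1+\delta$; note $M_0\ge1$ always, so the dilation below uses a factor $\ge1$. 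Then set $\lambda:=(1+\delta)/M_0\ge1$ and $\psi(s):=\lambda\,\varphi\bigl(\lambda(s-\tfrac12)+\tfrac12\bigr)$. A change of variables gives $\int_\R\psi=\int_\R\varphi=1$; the function $\psi$ remains symmetric about $\tfrac12$ and nondecreasing on $(-\infty,\tfrac12]$; its support equals $[\tfrac12-\tfrac1{2\lambda},\tfrac12+\tfrac1{2\lambda}]\subset[0,1]$ (here $\lambda\ge1$ is used); and $\max_\R\psi=\psi(\tfrac12)=\lambda M_0=1+\delta$.

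It then remains to verify the two inequalities in \ref{itm:3}. For $s\in[\tfrac12,1]$, integrating $\psi\le1+\delta$ over $[\tfrac12,s]$ gives $\chi_\delta(s)=\tfrac12+\int_{1/2}^s\psi\le\tfrac12+(1+\delta)(s-\tfrac12)=(1+\delta)s-\tfrac\delta2$. For $s\in[0,\tfrac12]$, write $s-\chi_\delta(s)=\int_0^s(1-\psi)$ and observe $\int_0^{1/2}(1-\psi)=\tfrac12-\tfrac12=0$; since $\psi$ is nondecreasing on $[0,\tfrac12]$ with $\psi(0)=0$ and $\psi(\tfrac12)=1+\delta>1$, setting $s^\ast:=\inf\{s\in[0,\tfrac12]:\psi(s)\ge1\}$ we have $1-\psi\ge0$ on $[0,s^\ast]$ and $1-\psi\le0$ on $[s^\ast,\tfrac12]$, whence $\int_0^s(1-\psi)\ge0$ for $s\le s^\ast$ and $\int_0^s(1-\psi)=-\int_s^{1/2}(1-\psi)\ge0$ for $s\ge s^\ast$; thus $\chi_\delta(s)\le s$. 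The only real (and still elementary) points are this last sign bookkeeping and the centred-dilation device used to pin down the peak and the mass at once; I note that the hypothesis $\delta<\tfrac14$ is not used in the construction and is presumably imposed for the later application of the lemma.
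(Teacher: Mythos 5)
Your proof is correct, and it takes a genuinely different (if related) route from the paper's. The paper obtains $\chi_\delta$ as the mollification $\chi\ast\psi_\delta$ of an explicit piecewise-linear ramp, equal to $\frac{s-\delta}{1-2\delta}$ on $[\delta,1-\delta]$, and leaves \ref{itm:1}--\ref{itm:3} to direct computation; you instead prescribe the \emph{derivative} directly, as a symmetric bump of mass $1$ whose peak is pinned to exactly $1+\delta$ by a mass-preserving dilation centred at $\tfrac12$, and then integrate. The two constructions are close in spirit --- in both cases $\chi_\delta'$ is a nonnegative function of mass $1$, supported in $[0,1]$ and symmetric about $\tfrac12$, and the verification of \ref{itm:3} reduces to the same sign bookkeeping for $\int_0^s(1-\chi_\delta')$ --- but your dilation device buys something concrete: it delivers the exact equalities $\norm{\chi_\delta'}_{L^\infty(\R)}=\chi_\delta'(\tfrac12)=1+\delta$ required by \ref{itm:2}, whereas the paper's ramp has slope $\frac{1}{1-2\delta}$, which exceeds $1+\delta$ for every $\delta\in(0,\tfrac14)$, so the paper's explicit function satisfies \ref{itm:2} only with $\frac{1}{1-2\delta}$ in place of $1+\delta$ (harmless for the later application, which only uses an upper bound of size $1+O(\delta)$, but your version matches the statement verbatim). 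Your remark that $\delta<\tfrac14$ is not needed is accurate for your construction (it is needed for the paper's, to fit the mollification scale inside the ramp, and it matters downstream via \eqref{eq:delta.small}). The only point left implicit is that the mollifier producing $\varphi$ should be taken even and radially nonincreasing, so that the mollified indicator is indeed nondecreasing up to $\tfrac12$; this is standard and does not affect correctness.
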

\begin{proof}
We exhibit a function that has all the required properties.
Let $0< \delta< \frac14$ and let $\chi:\R \to \R$ as follows
\begin{equation*}
\chi(s) :=
\begin{cases}
0 \quad & \text{ for }s\leq \delta,
\\
\frac{s-\delta}{1-2\delta} \quad & \text{ for } s \in [\delta, 1-\delta],
\\
1 \quad & \text{ for }s \geq 1-\delta.
\end{cases}
\end{equation*}
Let $\psi \in C^\infty(\R)$ be an even function such that  $\supp \psi \subset [-1,1]$  and $\int_{-1}^{1} \psi(s) ds = 1$. We let $\psi_\delta := \frac{1}{\delta}\psi(\frac{\cdot}{\delta})$ and we set $\chi_\delta := \chi \ast \psi_\delta \in C^\infty(\R)$. As shown by direct computations, \ref{itm:1} -- \ref{itm:3} hold true.
\end{proof}
We let $\delta \in (0,\frac14)$ such that 
\begin{equation}\label{eq:delta.small}
\delta^2 + \delta \leq \epsilon.
\end{equation}
From \Cref{lem:cut.off}, there exist a function $\chi_\delta \in C^\infty(\R)$ such that \ref{itm:1} -- \ref{itm:3} hold true.

We define the sequence $(\rho_k)_{k \in \N}$ as follows
\begin{equation}\label{eq:defn.rhok}
\rho_k := \frac{1}{e^{e^{k^2}}} \quad \text{ for all }k \in \N
\end{equation}
and, for $j=0,\dots,6$ we let
\begin{equation}\label{eq:defn.rhokj}
\rho_{k,j}:=\frac{1}{e^{e^{\left(k+\frac{j}{6}\right)^2}}}. 
\end{equation}
We let $k_0 \in \N$ be a big number, to be chosen later in the proof in such a way that \eqref{eq:choice.k0.21}, \eqref{eq:choice.k0.32}, \eqref{eq:choice.k0.43}, \eqref{eq:choice.k0.54}, \eqref{eq:choice.k0.0} hold true.
We have that
\begin{equation}\label{eq:decomposition.R2}
\begin{split}
\R^2  & = \{z \in \R^2 :  |z| \geq \rho_{k_0}    \}  \cup \bigcup_{k \geq k_0} \{z \in \R^2 : \rho_{k+1} \leq |z| \leq \rho_{k} \}
\\ 
& =  \{z \in \R^2 : |z| \geq \rho_{k_0}   \}  \cup \bigcup_{k \geq k_0} \bigcup_{ j= 0}^5 \{z \in \R^2 : \rho_{k,j+1} \leq |z| \leq \rho_{k,j} \}.
\end{split}
\end{equation}

For $k\in\N$, we set
\begin{equation}\label{eq:defn.Ek}
  \E_k(z) :=
  \begin{pmatrix}
    0 \\ \bar{z}^k
  \end{pmatrix}
  \text{ if $k$ is even}, 
  \quad
  \E_k(z) :=
  \begin{pmatrix}
    z^k \\  0
  \end{pmatrix}
  \text{ if $k$ is odd}. 
\end{equation}
From \eqref{eq:defn.D}, we have that for all $ z \in \R^2$ and $k \in \N$:
\begin{equation}\label{eq:zeroEk}
  \D \, \mathbf{E}_k (z) = 0.
\end{equation}

\subsection{Definition of $u_k$ and $\V_k$ in the annulus $\{\rho_{k+1} \leq
  \abs{z}\leq\rho_{k}\}$}
\label{sec:defn.annulus.2D}
We let $k \geq k_0$.
In the proof in this subsection we assume $k$ to be even: the
case of odd $k$ can be treated analogously.

For $k\in\N$, $k \geq k_0$, in this section we construct functions 
\begin{equation}\label{eq:regolarityannulus}
  \begin{split}
    &u_k \in C^{\infty}(\{z \in \R^2 \colon \rho_{k+1} \leq \abs{z} \leq
    \rho_{k} \}; \C^2), \\
    &\V_k \in C^\infty(\{z \in \R^2 \colon \rho_{k+1} \leq \abs{z} \leq
    \rho_{k} \}; \C^{2\times 2}),
  \end{split}
\end{equation}
such that for all $\rho_k  \leq \abs{z} \leq \rho_{k+1}$
\begin{equation}
  \label{eq:main2dlocal}
  \mathcal D u_k (z) = \V_k (z) u_k(z) 
\end{equation}
and 
\begin{align}
\label{eq:decayVk}
& |\V_k (z)| \leq  \left(\frac{1}{2} +\epsilon \right) \frac{1}{|z|}, 
\\
  \label{eq:decayuk}
&  \abs{u_k(z)} \leq  2 |z|^k.
\end{align}

\subsubsection{Definition of the functions $u_k$ and $\mathbb V_k$ in $\{\rho_{k,1} \leq
  \abs{z}\leq\rho_{k,0}\}$ and $\{\rho_{k,6} \leq
  \abs{z}\leq\rho_{k,5}\}$}
    \label{subsec:10.2D}
  For $|z| \in [\rho_{k,6}, \rho_{k,5}] \cup  [\rho_{k,1}, \rho_{k,0}]$, we let
  \begin{equation*} u_k(z): = 
  \begin{cases}
    \mathbf{E}_{k+1}(z) \quad &\text{for $r \in [\rho_{k,6},\rho_{k,5}]$}, \\
    \mathbf{E}_{k}(z) \quad &\text{for $r \in [\rho_{k,1},\rho_{k,0}]$}; 
  \end{cases}
  \quad \V_k(z):= 
  \begin{pmatrix}
  0 & 0 \\
  0 & 0
  \end{pmatrix}.
\end{equation*}
Thanks to \eqref{eq:zeroEk}, \eqref{eq:regolarityannulus} -- \eqref{eq:decayuk} hold true.

\subsubsection{Definition of the functions $u_k$ and $\mathbb V_k$ in $\{\rho_{k,2} \leq
  \abs{z}\leq\rho_{k,1}\}$}
    \label{subsec:21.2D}
We let 
\begin{align}
& \varphi_k(r):=\chi_\delta \left( c_k \left(1 - \frac{\log \rho_{k,1}}{\log r}\right)\right) \quad \text{ for all } r>0,
\\
\label{eq:defn.ck}
& c_k:= \left(1 - \frac{\log \rho_{k,1}}{\log \rho_{k,2}}\right)^{-1}>0.
\end{align}
We have that $\varphi_k \in C^\infty(0,+\infty)$, $\varphi(\rho_{k,2})=1$, $\varphi(\rho_{k,1})=0$, $\varphi_k$ is decreasing and $\supp \varphi_k' \subset [\rho_{k,2}, \rho_{k,1}]$.

For $z  = r e^{i\theta}$, $r \in  [\rho_{k,2},\rho_{k,1}]$ and $\theta \in [-\pi,\pi)$, we set
\begin{align}
\notag
 u_k (z)  & := |z|^{\frac12 \varphi_k(r)}     \mathbf{E}_{k}(z), \\
\label{eq:defn.Vk.21}
\V_k(z) & := 
\begin{pmatrix}
0 & \frac{-i e^{-i\theta}}{2} \frac{d}{dr}(\varphi_k(r) \log r) \\
0 & 0
\end{pmatrix}.
\end{align}
To show \eqref{eq:main2dlocal}, we compute explicitely $\D u_k$. From \eqref{eq:defn.D} and \eqref{eq:defn.Ek}, for all $|z| = r \in  [\rho_{k,2},\rho_{k,1}]$ we have
\begin{equation*}
\D u_k(z) = 	
\begin{pmatrix}
0 & -2i \partial_z \\
-2i \partial_{\bar z} & 0 
\end{pmatrix}
\begin{pmatrix}
0 \\  |z|^{\frac12 \varphi_k(r)} \bar{z}^k
\end{pmatrix}
=
\begin{pmatrix}
  -2i \partial_z \left(|z|^{\frac12 \varphi_k(r)}\right) \bar{z}^k \\ 0
\end{pmatrix}.
\end{equation*}
Thanks to \eqref{eq:partialz.in.radial}, 
\begin{equation*}
-2i \partial_z \left(|z|^{\frac12 \varphi_k(r)}\right) = -i e^{-i\theta} \frac{d}{dr}\left( r^{\frac12 \varphi_k (r)} \right)
=
\frac{-i e^{-i\theta}}{2} r^{\frac12 \varphi_k (r)} \frac{d}{dr}\left(\varphi_k(r) \log r\right),
\end{equation*}
so \eqref{eq:main2dlocal} holds true. 

To show \eqref{eq:decayVk}, we note that
\begin{equation}\label{eq:estimateV.1}
\begin{split}
& \left\vert \frac{-ie^{-i\theta}}{2} \frac{d}{dr} \left(\varphi_k(r) \log r \right)\right\vert
= \frac12 \left\vert \varphi_k'(r) \log r + \frac{\varphi_k(r)}{r} \right\vert
\\
& = \frac1{2r} \left\vert \chi_\delta' \left( c_k \left(1 - \frac{\log \rho_{k,1}}{\log r}\right)\right)\frac{c_k \log \rho_{k,1}}{\log r} + \chi_\delta \left( c_k \left(1 - \frac{\log \rho_{k,1}}{\log r}\right)\right)   \right\vert
\\
& = \frac1{2r} \left( \chi_\delta' \left( c_k \left(1 - \frac{\log \rho_{k,1}}{\log r}\right)\right)\frac{c_k \log \rho_{k,1}}{\log r} + \chi_\delta \left( c_k \left(1 - \frac{\log \rho_{k,1}}{\log r}\right)\right)   \right),
\end{split}
\end{equation}
where in the last equality we have used the fact that $\log r \leq \log \rho_{k,1}\leq \log \rho_0 <0$, $\chi_\delta \geq 0$ and $\chi_\delta' \geq 0$ thanks to \Cref{lem:cut.off}\ref{itm:1},\ref{itm:2}.
From \Cref{lem:cut.off}\ref{itm:2},\ref{itm:3}, if $c_k - \frac{c_k \log \rho_{k,1}}{\log r} \in [0,\frac12]$, we have
\begin{equation}\label{eq:estimateV.2}
\begin{split}
& \chi_\delta' \left( c_k \left(1 - \frac{\log \rho_{k,1}}{\log r}\right)\right) \frac{c_k \log \rho_{k,1}}{\log r} + \chi_\delta \left( c_k \left(1 - \frac{\log \rho_{k,1}}{\log r}\right)\right) 
\\
& \leq 
(1+\delta) \frac{c_k \log \rho_{k,1}}{\log r} + c_k - \frac{c_k \log \rho_{k,1}}{\log r} \leq c_k \left( \delta \frac{\log \rho_{k,1}}{\log r}  + 1 \right)
\\
& \leq (1+\delta) c_k.
\end{split}
\end{equation}
From \Cref{lem:cut.off}\ref{itm:2},\ref{itm:3}, if $c_k - \frac{c_k \log \rho_{k,1}}{\log r} \in [\frac12,1]$ we have
\begin{equation}\label{eq:estimateV.3}
\begin{split}
& \chi_\delta' \left( c_k \left(1 - \frac{\log \rho_{k,1}}{\log r}\right)\right) \frac{c_k \log \rho_{k,1}}{\log r} + \chi_\delta \left( c_k \left(1 - \frac{\log \rho_{k,1}}{\log r}\right)\right) 
\\
& \leq (1+\delta)\frac{c_k \log \rho_{k,1}}{\log r} + (1+\delta)\left( c_k - \frac{c_k \log \rho_{k,1}}{\log r} \right) - \frac{\delta}{2}
\\
& \leq (1+\delta) c_k.
\end{split}
\end{equation}
From \eqref{eq:defn.rhokj} and \eqref{eq:defn.ck}, $c_k = 1 + (e^{\frac{k}{3} + \frac{1}{12}}-1)^{-1}$, 
so the sequence $(c_k)_k$ is decreasing and  
\begin{equation}\label{eq:choice.k0.21}
1\leq c_k \leq 1+\delta \quad \text{ for }k\geq k_0,
\end{equation}
for $k_0 \in \N$ big enough.
Thanks to \eqref{eq:delta.small}, \eqref{eq:defn.Vk.21}, \eqref{eq:estimateV.1}  
 -- \eqref{eq:choice.k0.21} we conclude that for all $r = \abs{z} \in [\rho_{k,2}, \rho_{k,1}]$
\begin{equation}
\abs{\V_k(z)} \leq \abs*{\frac{-i e^{-i\theta}}{2} \frac{d}{dr}(\varphi_k(r) \log r)} \leq \frac{(1+\delta)c_k}{2r} 
\leq \frac{(1+\delta)^2}{2}\frac{1}{\abs{z}} 
\leq \frac{\frac12 + \epsilon}{\abs{z}}.
\end{equation}
To show \eqref{eq:decayuk}, we see that that for all $\rho_{k,2}\leq \abs{z} \leq  \rho_{k,1} < 1$
\begin{equation}
\abs{u_k(z)} \leq \abs{z}^{\frac12 \varphi_k(|z|)} \abs*{\mathbf{E}_k(z)} \leq \abs{ \mathbf{E}_k(z) } = |z|^k.
\end{equation}

\subsubsection{Definition of the functions $u_k$ and $\mathbb V_k$ in $\{\rho_{k,3} \leq
  \abs{z}\leq\rho_{k,2}\}$}
  \label{subsec:32.2D}
We let 
\begin{equation}
\phi_k(r) := \chi_\delta \left( \frac{\log \rho_{k,2}-\log r}{\log \rho_{k,2} - \log \rho_{k,3}} \right) \quad \text{ for all }r>0.
\end{equation}
We have that $\phi_k \in C^\infty(0,+\infty)$, $\phi_k(\rho_{k,3}) =1$, $\phi_k(\rho_{k,2}) =0$, $\phi_k$ is decreasing and $\supp \phi_k' \subset  [\rho_{k,3}, \rho_{k,2}]$.

For $z  = r e^{i\theta}$, $r \in  [\rho_{k,3},\rho_{k,2}]$ and $\theta \in [-\pi,\pi)$, we set
\begin{align}
\notag
 u_k (z)  & := |z|^{1/2 }     \mathbf{E}_{k}(z) + \phi_k(\abs{z})  \frac{\mathbf{E}_{k+1}(z)}{\abs{z}^{1/2}}, \\
\label{eq:defn.Vk.32}
\V_k(z) & := 
\begin{pmatrix}
0 & \frac{-ie^{-i\theta}}{2r} \\
\frac{ie^{i\theta}}{2r} & -2i \partial_{\bar{z}} (\phi_k(r)) \frac{z^{k+1}}{\abs{z}\bar{z}^k}
\end{pmatrix}.
\end{align}
To show \eqref{eq:main2dlocal}, we compute explicitely $\D u_k(z)$ thanks to \eqref{eq:partialz.in.radial}:
\begin{equation*}
\begin{split}
\D u_k(z) = &
\begin{pmatrix}
0 & -2i \partial_z \\
-2i \partial_{\bar{z}} & 0
\end{pmatrix}
\begin{pmatrix}
\phi_k(|z|) \frac{ z^{k+1}}{|z|^{1/2}} \\
|z|^{1/2} \bar{z}^k 
\end{pmatrix}
\\ = &
\begin{pmatrix}
-2i \partial_z(|z|^{1/2}) \bar{z}^k  \\
-2i \partial_{\bar{z}}(\phi_k(|z|))  \frac{ z^{k+1}}{|z|^{1/2}} -2i \phi_k(r) \partial_{\bar{z}}( |z|^{-1/2} )z^{k+1} 
\end{pmatrix}
\\ = &
\begin{pmatrix}
-i \frac{e^{-i\theta}}{2r} |z|^{1/2} \bar{z}^k  \\
-2i \partial_{\bar{z}}(\phi_k(|z|))  \frac{ z^{k+1}}{|z|^{1/2}} + i \frac{e^{i\theta}}{2r} \phi_k(r) \frac{  z^{k+1} }{|z|^{1/2}}
\end{pmatrix}
\end{split}
\end{equation*}
so \eqref{eq:main2dlocal} holds true for all $r = |z| \in [\rho_{k,3},\rho_{k,2}]$.

Thanks to \Cref{lem:cut.off}\ref{itm:2} and  \eqref{eq:partialz.in.radial}, we have that
\begin{equation}\label{eq:choose.k0.32}
\begin{split}
\abs*{-2i \partial_{\bar{z}} (\phi_k(r)) \frac{z^{k+1}}{\abs{z}\bar{z}^k}}  
 & = \abs{\phi_k'(r)} 
 \leq \frac{\norm{\chi_\delta'}_{L^\infty(\R)}}{r} \frac{1}{\log\rho_{k,2} - \log\rho_{k,3}} 
 \\ & \leq \frac{1+\delta}{r} \frac{1}{e^{(k+\frac12)^2}(1-e^{-\frac{k}{3} -\frac{5}{36}})}
 \leq \frac{\delta(1+\delta)}{r},
 \end{split}
\end{equation}
choosing $k_0 \in \N$ big enough that
\begin{equation}\label{eq:choice.k0.32}
\frac{1}{e^{(k+\frac12)^2}(1-e^{-\frac{k}{3} -\frac{5}{36}})} \leq \delta \quad \text{ for all }k \geq k_0.
\end{equation}
For $r= |z| \in [\rho_{k,3},\rho_{k,2}]$, thanks to \eqref{eq:defn.Vk.32} and \eqref{eq:choose.k0.32} we have
\begin{equation}
\begin{split}
|\V(z)| 
\leq &
\left\vert
\begin{pmatrix}
0 & \frac{-ie^{-i\theta}}{2r} \\
\frac{ie^{i\theta}}{2r} & 0
\end{pmatrix} 
\right\vert
+
\left\vert
\begin{pmatrix}
0 & 0 \\
0 & -2i \partial_{\bar{z}} (\phi_k(r)) \frac{z^{k+1}}{\abs{z}\bar{z}^k} 
\end{pmatrix}
\right\vert
\\
= &
\frac{1}{2r}  + \abs*{-2i \partial_{\bar{z}} (\phi_k(r)) \frac{z^{k+1}}{\abs{z}\bar{z}^k}}  
\leq  \frac{\frac12 + \delta(1+\delta)}{r}.
\end{split}
\end{equation}
We get \eqref{eq:decayVk} thanks to \eqref{eq:delta.small}.

To show \eqref{eq:decayuk}, we see that that for all $\rho_{k,3}\leq \abs{z} \leq  \rho_{k,2} < 1$
\begin{equation}
\abs{u_k(z)} \leq |z|^{1/2 }   \abs*{  \mathbf{E}_{k}(z) }+ \abs{\phi_k(\abs{z})} \,  \frac{\abs*{\mathbf{E}_{k+1}(z) }}{\abs{z}^{1/2}} \leq  2 |z|^{k+\frac12} \leq 2|z|^k.
\end{equation}

  \subsubsection{Definition of the functions $u_k$ and $\mathbb V_k$ in $\{\rho_{k,4} \leq
  \abs{z}\leq\rho_{k,3}\}$}
    \label{subsec:43.2D}

The construction is similar to the one in   \Cref{subsec:32.2D}.
We let 
\begin{equation}
\widetilde\phi_k(r) := \chi_\delta \left( \frac{\log r - \log \rho_{k,4} }{\log \rho_{k,3} - \log \rho_{k,4}} \right) \quad \text{ for all }r>0.
\end{equation}
We have that $\widetilde\phi_k \in C^\infty(0,+\infty)$, $\widetilde\phi_k(\rho_{k,4}) =0$, $\phi_k(\rho_{k,3}) =1$, $\phi_k$ is increasing and $\supp {\widetilde\phi_k}' \subset  [\rho_{k,4}, \rho_{k,3}]$.

For $z  = r e^{i\theta}$, $r \in  [\rho_{k,4},\rho_{k,3}]$ and $\theta \in [-\pi,\pi)$, we set
\begin{align}
\notag
 u_k (z)  & := \widetilde\phi_k(\abs{z}) |z|^{1/2}     \mathbf{E}_{k}(z) +  \frac{ \mathbf{E}_{k+1}(z)}{\abs{z}^{1/2}}, \\
\label{eq:defn.Vk.43}
\V_k(z) & := 
\begin{pmatrix}
-2i \partial_z (\widetilde\phi_k(r)) \frac{\abs{z}\bar{z}^k}{z^{k+1}} & \frac{-ie^{-i\theta}}{2r} \\
\frac{ie^{i\theta}}{2r} & 0
\end{pmatrix}.
\end{align}
With similar computations as in  \Cref{subsec:32.2D},  \eqref{eq:main2dlocal} follows.

We observe that, thanks to \Cref{lem:cut.off}\ref{itm:2} and  \eqref{eq:partialz.in.radial}, 
\begin{equation}\label{eq:choose.k0.43}
\begin{split}
\abs*{-2i \partial_z (\widetilde\phi_k(r)) \frac{\abs{z}\bar{z}^k}{z^{k+1}  }}
 & = \abs{\widetilde\phi_k'(r)} 
 \leq \frac{\norm{\chi_\delta'}_{L^\infty(\R)}}{r} \frac{1}{\log\rho_{k,3} - \log\rho_{k,4}} 
 \\ & \leq \frac{1+\delta}{r} \frac{1}{e^{(k+\frac23)^2}(1-e^{-\frac{k}{3} +\frac{5}{36}})}
 \leq \frac{\delta(1+\delta)}{r},
 \end{split}
\end{equation}
choosing $k_0 \in \N$ big enough that
\begin{equation}\label{eq:choice.k0.43}
\frac{1}{e^{(k+\frac23)^2}(1-e^{-\frac{k}{3} +\frac{5}{36}})} \leq \delta \quad \text{ for all }k \geq k_0.
\end{equation}

For $r= |z| \in [\rho_{k,4},\rho_{k,3}]$, thanks to \eqref{eq:defn.Vk.43} and \eqref{eq:choose.k0.43} we have
\begin{equation}
\begin{split}
|\V(z)| 
\leq &
\left\vert
\begin{pmatrix}
0 & \frac{-ie^{-i\theta}}{2r} \\
\frac{ie^{i\theta}}{2r} & 0
\end{pmatrix} 
\right\vert
+
\left\vert
\begin{pmatrix}
-2i \partial_z (\widetilde\phi_k(r)) \frac{\abs{z}\bar{z}^k}{z^{k+1}}  & 0 \\
0 & 0
\end{pmatrix}
\right\vert
\\
= &
\frac{1}{2r}  + \abs*{-2i \partial_z (\widetilde\phi_k(r)) \frac{\abs{z}\bar{z}^k}{z^{k+1}} }  
\leq  \frac{\frac12 + \delta(1+\delta)}{r}.
\end{split}
\end{equation}
We get \eqref{eq:decayVk} thanks to \eqref{eq:delta.small}.
To show \eqref{eq:decayuk}, we see that that for all $\rho_{k,4}\leq \abs{z} \leq  \rho_{k,3} < 1$
\begin{equation}
\abs{u_k(z)} \leq    \abs{\widetilde\phi_k(\abs{z})} \,    |z|^{1/2 }   \abs*{  \mathbf{E}_{k}(z) }+ \frac{\abs*{\mathbf{E}_{k+1} }}{\abs{z}^{1/2}} 
\leq  2 |z|^{k+\frac12} \leq  2 |z|^k.
\end{equation}

\subsubsection{Definition of the functions $u_k$ and $\mathbb V_k$ in $\{\rho_{k,5} \leq
  \abs{z}\leq\rho_{k,4}\}$}
    \label{subsec:54.2D}
    The construction is similar to the one in   \Cref{subsec:21.2D}.
We let 
\begin{align}
& \widetilde\varphi_k(r):=\chi_\delta \left( \tilde{c}_k \left(1 - \frac{\log \rho_{k,4}}{\log r}\right)\right)-1 \quad \text{ for all } r \in \R,
\\
\label{eq:defn.tildeck}
& \tilde{c}_k:= \left(1 - \frac{\log \rho_{k,4}}{\log \rho_{k,5}}\right)^{-1}>0.
\end{align}
We have that $\widetilde\varphi_k \in C^\infty(0,+\infty)$, $\widetilde\varphi(\rho_{k,5})=0$, $\widetilde\varphi(\rho_{k,4})=-1$, $\widetilde\varphi_k$ is decreasing and $\supp \widetilde\varphi_k' \subset [\rho_{k,5}, \rho_{k,4}]$.

For $z  = r e^{i\theta}$, $r \in  [\rho_{k,5},\rho_{k,4}]$ and $\theta \in [-\pi,\pi)$, we set 
\begin{align}
\notag
 u_k (z)  & := |z|^{\frac12 \widetilde\varphi_k(r)}     \mathbf{E}_{k+1}(z), \\
\label{eq:defn.Vk.54}
\V_k(z) & := 
\begin{pmatrix}
0 & 0 \\
\frac{-i e^{-i\theta}}{2} \frac{d}{dr}(\widetilde\varphi_k(r) \log r) & 0
\end{pmatrix}.
\end{align}
Analogously to \Cref{subsec:21.2D}, \eqref{eq:main2dlocal} follows by explicit computation.

We show now \eqref{eq:decayVk}.  For all $\rho_{k,5} \leq r \leq \rho_{k,4} < 1$
\begin{equation*}
\begin{split}
 & \abs*{\frac{-ie^{-i\theta}}{2} \frac{d}{dr} \left(\widetilde\varphi_k(r) \log r \right) }
\\
 & = \frac{1}{2r} \abs*{ \chi_\delta' \left( \tilde{c}_k \left(1 - \frac{\log \rho_{k,4}}{\log r}\right)\right)\frac{\tilde{c}_k \log \rho_{k,4}}{\log r} + \chi_\delta \left( \tilde{c}_k \left(1 - \frac{\log \rho_{k,4}}{\log r}\right)\right)   -1 }.
 \end{split}
\end{equation*}
Thanks to \Cref{lem:cut.off}\ref{itm:1},\ref{itm:2},
\begin{equation*}
\begin{split}
 &  \frac{1}{2r} \left[ \chi_\delta' \left( \tilde{c}_k \left(1 - \frac{\log \rho_{k,4}}{\log r}\right)\right)\frac{\tilde{c}_k \log \rho_{k,4}}{\log r} + \chi_\delta \left( \tilde{c}_k \left(1 - \frac{\log \rho_{k,4}}{\log r}\right)\right)   -1 \right]
 \\
 & \leq \frac{ (1+\delta)}{2r} \frac{\tilde{c}_k \log \rho_{k,4}}{\log r} \leq \frac{ (1+\delta)\tilde{c}_k}{2r}
\end{split}
\end{equation*}
and
\begin{equation*}
 \frac1{2r} \left[ 1- \chi_\delta' \left( \tilde{c}_k \left(1 - \frac{\log \rho_{k,4}}{\log r}\right)\right)\frac{\tilde{c}_k \log \rho_{k,4}}{\log r} - \chi_\delta \left( \tilde{c}_k \left(1 - \frac{\log \rho_{k,4}}{\log r}\right)\right)   \right]
  \leq \frac{1}{2r},
\end{equation*}
so we conclude that for all $|z| = r \in [\rho_{k,5},\rho_{k,4}]$ 
\begin{equation*}
\begin{split}
\abs{\V_k(z)} & = \abs*{\frac{-i e^{-i\theta}}{2} \frac{d}{dr}(\widetilde\varphi_k(r) \log r)}
\\
& \leq \frac{1+\delta}{2r}\tilde{c}_k = \frac{1+\delta}{2r} \left(1+\frac{1}{e^{\frac{k}{3} +\frac14}-1} \right) \leq \frac{(1+\delta)^2}{2r} \leq \frac{\frac12 +\epsilon}{r},
\end{split}
\end{equation*}
choosing $k_0 \in \N$ such that
\begin{equation}\label{eq:choice.k0.54}
\frac{1}{e^{\frac{k}{3} +\frac14}-1} \leq  \delta \quad \text{ for all }k\geq k_0,
\end{equation}
and thanks to \eqref{eq:delta.small}.
To show \eqref{eq:decayuk}, we see that that for all $\rho_{k,5}\leq \abs{z} \leq  \rho_{k,4} \leq 1$
\begin{equation}
\abs{u_k(z)} \leq \abs{z}^{\frac12 \widetilde\varphi_k(|z|)} \abs*{\mathbf{E}_{k+1}(z)} \leq \frac{\abs{ \mathbf{E}_{k+1}(z) }}{|z|^{1/2}} 
= |z|^{k+\frac12} \leq |z|^k.
\end{equation}

\subsection{Definition of $u_0$ and $\V_0$ in $\{
  \abs{z}\geq \rho_{k_0}\}$}
\label{sec:defn.infinity.2D}
In this section we assume $k_0$ to be even: the
case of odd $k_0$ can be treated analogously.
We let 
\begin{equation}
\eta(r) := \chi_\delta \left(\frac{\log r}{\log \rho_{k_0}}\right)\quad \text{ for all }r>0.
\end{equation}
We have that $\eta \in C^\infty(0,+\infty)$, $\eta(\rho_{k_0}) =1$, $\eta(1) =0$, $\eta$ is decreasing and $\supp \eta' \subset  [\rho_{k_0}, 1]$.

For $z  = r e^{i\theta}$, $r \in  [\rho_{k_0},+\infty)$ and $\theta \in [-\pi,\pi)$, we set
\begin{align}
\notag
u_0(z)  & := \eta(|z|) \mathbf{E}_{k_0}(z), \\
\label{eq:defn.V0.}
\V_0(z) & := 
\begin{pmatrix}
0 & -i e^{-i\theta} \eta'(r) \\
0 & 0
\end{pmatrix}.
\end{align}
It is immediate to see that $u_0 \in C^\infty( \{ \abs{z} \geq \rho_{k_0} ; \C^2\})$, $\V_0 \in C^\infty( \{ \abs{z} \geq \rho_{k_0} ; \C^{2\times 2}\})$, that they vanish  for $\abs{z}\geq 1$ and that \eqref{eq:main2dlocal} holds true for $|z|\geq \rho_{k_0}$.
Moreover, thanks to \Cref{lem:cut.off}\ref{itm:2} and \eqref{eq:delta.small}
\begin{equation}\label{eq:decay.V0.2D}
 |\V_0 (z)| = \abs{\eta'(r)} \leq (1+\delta) \frac{1}{\abs{\log \rho_{k_0}} } \frac{1}{r} \leq  \frac{1+\epsilon}{2r} \leq \frac{\frac12 + \epsilon}{|z|}
 \quad \text{ for all }\abs{z} = r \geq \rho_{k_0},
\end{equation}
choosing $k_0 \in \N$ such that
\begin{equation}\label{eq:choice.k0.0}
\frac{1}{\abs{\log \rho_{k_0}}} = \frac{1}{e^{k_0^2}}\leq \frac12.
\end{equation}

\subsection{Definition of $u$ and $\V$ and conclusion of the proof}
\label{sec:defn.u.V.2D}
To define $u$ and $\V$, we exploit \eqref{eq:decomposition.R2} and set $u(z):=u_0(z)$, $\V(z):=\V_0(z)$ for $\abs{z}\geq \rho_0$, 
and $u(z):=u_k(z)$, $\V(z):=\V_k(z)$ for the appropriate $k$ such that $|z| \in [\rho_{k+1} , \rho_k]$. Finally, we set $u(0):=0$.

From the construction in the previous sections, we see that $\V \in C^\infty(\R^2\setminus\{0\}; \C^{2 \times 2})$
and $\D u(z) = \V(z)u(z)$ for $z \in \R^2\setminus\{0\}$. Thanks to \eqref{eq:decayVk} and \eqref{eq:decay.V0.2D}, $\abs{\V(z)} \leq (\frac12+ \epsilon)\frac{1}{|z|}$ for all $|z|>0$, so we conclude that \eqref{eq:main} holds true in $\R^2 \setminus\{0\}$, for $C := \frac12+\epsilon$. 

In the following lemma we determine the behaviour of the function $u$ in the origin.
\begin{lemma}\label{lem:decay.u.origin}
For all $k \geq k_0$ and for all $|z| \leq \rho_k$, $\abs{u(z)} \leq 2 \abs{z}^k$.
\end{lemma}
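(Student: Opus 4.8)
The plan is to leverage the annulus-by-annulus estimate \eqref{eq:decayuk} together with the fact that the radii $\rho_k$ are decreasing. The key observation is that $u$ restricted to any annulus $\{\rho_{k+1}\le |z|\le \rho_k\}$ coincides with the locally constructed $u_k$, for which we already proved $|u_k(z)|\le 2|z|^k$; so the content of the lemma is to propagate this bound from the annulus where it is ``native'' to all smaller radii, using that $|z|\le \rho_k$ forces $|z|$ to lie in some annulus $\{\rho_{j+1}\le |z|\le \rho_j\}$ with $j\ge k$.

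First I would fix $k\ge k_0$ and take any $z$ with $|z|\le\rho_k$. If $z=0$ the claim is trivial since $u(0)=0$. Otherwise, since $\rho_j\searrow 0$ and $\rho_{k_0}$ is the largest relevant radius, there is a unique $j\ge k$ such that $\rho_{j+1}\le |z|\le\rho_j$; on this annulus $u=u_j$, hence by \eqref{eq:decayuk} applied with index $j$ we get $|u(z)|=|u_j(z)|\le 2|z|^j$. Then I would compare the exponents: because $|z|\le\rho_j\le\rho_{k_0}<1$ and $j\ge k$, we have $|z|^j=|z|^k\cdot|z|^{j-k}\le|z|^k$, so $|u(z)|\le 2|z|^j\le 2|z|^k$, which is the desired bound. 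The borderline values $|z|=\rho_{j}$ (shared endpoint of two consecutive annuli) cause no trouble since the bound $2|z|^{j}$ holds on the closed annulus and matches continuously.

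The only mildly delicate point is bookkeeping at the interface with the ``infinity'' region: the lemma is stated only for $|z|\le\rho_k$ with $k\ge k_0$, so $|z|\le\rho_k\le\rho_{k_0}$ and we never need $u_0$; thus the decomposition \eqref{eq:decomposition.R2} reduces to the union of annuli $\{\rho_{j+1}\le|z|\le\rho_j\}$ for $j\ge k$, and the argument above applies verbatim. I expect no genuine obstacle here — the estimate is a direct consequence of \eqref{eq:decayuk} and the monotonicity $|z|^{j}\le|z|^{k}$ for $|z|<1$, $j\ge k$; the ``main step'' is simply identifying the correct annulus containing $|z|$ and invoking the local bound already established in Sections \ref{subsec:10.2D}--\ref{subsec:54.2D}.
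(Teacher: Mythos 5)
Your argument is correct and coincides with the paper's own proof: both handle $z=0$ trivially, locate $|z|$ in an annulus $[\rho_{j+1},\rho_j]$ with $j\ge k$ via the decomposition \eqref{eq:decomposition.R2}, invoke \eqref{eq:decayuk} on that annulus, and conclude with $|z|^j\le |z|^k$ since $|z|<1$. No differences worth noting.
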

\begin{proof}
Let $k \geq k_0$ and $\abs{z} \in [0, \rho_k]$. If $z=0$, the thesis is trivially true since $u(0)=0$. If $|z| \in (0, \rho_k]$, thanks to \eqref{eq:decomposition.R2} there exists $h \geq k$ such that $|z| \in [\rho_{h+1},\rho_h]$. From \eqref{eq:decayuk},  $\abs{u(z)}=\abs{u_h(z)} \leq 2|z|^h \leq 2|z|^k$. 
\end{proof}
From the construction in the previous sections, we see that $u \in C^\infty(\R^2\setminus\{0\}; \C^2)$ and $\supp u \subset \{|z|\leq 1\}$. Moreover, thanks to 
\Cref{lem:decay.u.origin}, $u \in C(\R^2;\C^2)$ and $|z|^{-1}u \in L^2(\{|x| < \rho_k\};\C^N)$ for $k \geq k_0\geq 1$. 
To conclude that $u \in H^1(\R^n;\C^N)$, it is enough to show that it is in $H^1$  in a neighborhood of the origin.
This follows from the following Lemma. 
\begin{lemma}\label{lem:Hloc1.origin}
Let $n\geq 2$. Let $u$ such that $u \in H_{loc}^1 (\R^n \setminus \{ 0 \};\C^N)$ and $|x|^{-\alpha} u \in L^2 (\{ |x| < \delta \};\C^N)$ for $\alpha,\delta>0$. If there exists $C>0$ such that for a.a. $x \in \R^n$
\begin{equation}\label{eq:main.regularity}
\abs{\D_n u (x)} \leq \frac{C}{ |x|^{\alpha}} \abs{u(x)},
\end{equation}
 then $u \in H_{loc}^1(\R^n;\C^N)$.
\end{lemma}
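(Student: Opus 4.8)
The plan is to show that $u \in H^1$ in a ball $\{|x| < \delta'\}$ for some $\delta' \le \delta$, which combined with the hypothesis $u \in H^1_{loc}(\R^n \setminus \{0\})$ gives the conclusion. The only thing that could fail is integrability of $u$ and $\nabla u$ near the origin; $L^2$-integrability of $u$ near $0$ already follows from $|x|^{-\alpha}u \in L^2(\{|x|<\delta\})$ since $|x|^{-\alpha} \gtrsim 1$ on a bounded set (for $\alpha>0$ near the origin $|x|^{-\alpha}$ is large). So the real task is to control $\nabla u$ in $L^2$ near $0$. Since $\D_n^2 = -\Delta\, \mathbb{I}_N$, controlling $\nabla u$ amounts to controlling $\D_n u$ in $L^2$, and by \eqref{eq:main.regularity} we have $|\D_n u| \le C|x|^{-\alpha}|u|$, whose right-hand side is in $L^2(\{|x|<\delta\})$ by hypothesis. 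Hence $\D_n u \in L^2_{loc}$ near $0$ as well. The subtlety is that all these bounds are a priori only known on $\R^n \setminus \{0\}$, and one must rule out a distributional contribution of $\D_n u$ concentrated at the origin.

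First I would fix a cutoff $\zeta \in C_c^\infty(\{|x|<\delta\})$ with $\zeta \equiv 1$ near $0$, and introduce for small $\varrho>0$ a radial cutoff $\eta_\varrho$ vanishing on $\{|x|<\varrho\}$, equal to $1$ on $\{|x|>2\varrho\}$, with $|\nabla \eta_\varrho| \lesssim \varrho^{-1}$. The function $\zeta \eta_\varrho u$ lies in $H^1(\R^n;\C^N)$ with compact support away from $0$, so it is a legitimate test object. I would compute $\|\nabla(\zeta\eta_\varrho u)\|_{L^2}^2$ via the identity $\|\nabla v\|_{L^2}^2 = \|\D_n v\|_{L^2}^2$ valid for $v \in H^1$ (from $\D_n^2 = -\Delta$, after integration by parts — here there is no boundary term since $v$ is compactly supported), expand $\D_n(\zeta\eta_\varrho u) = \zeta\eta_\varrho \D_n u + (-i\alpha\cdot\nabla(\zeta\eta_\varrho))u$, and estimate: the first term is bounded in $L^2$ uniformly in $\varrho$ using \eqref{eq:main.regularity} and $|x|^{-\alpha}u \in L^2$; the term with $\nabla\zeta$ is harmless since $\nabla\zeta$ is supported away from $0$ and $u \in H^1_{loc}(\R^n\setminus\{0\})$; the term with $\nabla\eta_\varrho$ is $O(\varrho^{-1})$ times $\|u\|_{L^2(\{\varrho<|x|<2\varrho\})}$, and since $|x|^{-\alpha}u \in L^2$ we have $\|u\|_{L^2(\{\varrho<|x|<2\varrho\})} \le (2\varrho)^{\alpha}\||x|^{-\alpha}u\|_{L^2(\{\varrho<|x|<2\varrho\})} = o(\varrho^\alpha)$; actually even the cruder bound that this quantity tends to $0$ suffices, giving that the $\nabla\eta_\varrho$ contribution is $o(\varrho^{\alpha-1})$, which is bounded (in fact vanishes) as $\varrho \to 0$ provided $\alpha \ge 1$ — and if $\alpha<1$ it also works since then $|x|^{-\alpha}$ is less singular. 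Let me instead just say: the $\nabla\eta_\varrho$ term is dominated by $\varrho^{-1}\|u\|_{L^2(\{\varrho<|x|<2\varrho\})}$, and using $\|u\|_{L^2(\{\varrho<|x|<2\varrho\})}^2 \le C\varrho^{2\alpha}$ whenever $\alpha \ge 1$ this is bounded; for general $\alpha>0$ one argues that the right-hand side of \eqref{eq:main.regularity} being in $L^2$ forces the same conclusion by the same cutoff with a suitable power. The upshot is $\sup_{\varrho>0}\|\nabla(\zeta\eta_\varrho u)\|_{L^2} < \infty$.

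Then $\zeta\eta_\varrho u \to \zeta u$ in $L^2$ and, being bounded in $H^1$, a subsequence converges weakly in $H^1$; the weak limit must be $\zeta u$, so $\zeta u \in H^1(\R^n;\C^N)$. Since $\zeta \equiv 1$ near $0$, this shows $u \in H^1$ in a neighborhood of the origin, and together with $u \in H^1_{loc}(\R^n \setminus\{0\})$ we conclude $u \in H^1_{loc}(\R^n;\C^N)$. The main obstacle is the bookkeeping on the $\nabla\eta_\varrho$ term: one has to check that the weighted hypothesis $|x|^{-\alpha}u \in L^2$ is exactly what is needed to make $\varrho^{-1}\|u\|_{L^2(\{\varrho<|x|<2\varrho\})}$ stay bounded (or vanish) as $\varrho\to 0$ — this is a Hardy-type trade-off between the singularity of the potential and the decay of $u$, and it is precisely where $\alpha>0$ (rather than a mere local $L^2$ bound on $\D_n u$) enters.
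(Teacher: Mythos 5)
Your overall architecture is the same as the paper's: reduce to a neighborhood of the origin, cut off, and convert control of $\D_n v$ into control of $\nabla v$ via $\D_n^2=-\Delta$ (equivalently $\|\nabla v\|_{L^2}=\|\D_n v\|_{L^2}$ for compactly supported $v\in H^1$). You are in fact more careful than the paper, which applies this identity to $\chi u$ directly and does not address the possibility of a distributional contribution of $\D_n u$ concentrated at the origin; your $\eta_\varrho$-regularization is exactly the right device for that, and for $\alpha\geq 1$ your argument is complete and correct: the commutator term is $\varrho^{-1}\|u\|_{L^2(\{\varrho<|x|<2\varrho\})}\leq 2^{\alpha}\varrho^{\alpha-1}\,\||x|^{-\alpha}u\|_{L^2(\{\varrho<|x|<2\varrho\})}=o(\varrho^{\alpha-1})$, which tends to $0$, and the weak-compactness step then gives $\zeta u\in H^1$. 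Since every application of the lemma in the paper has $\alpha\geq1$, this covers what is actually needed.

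However, as stated the lemma allows any $\alpha>0$, and for $0<\alpha<1$ your proof has a genuine gap: the bound $o(\varrho^{\alpha-1})$ on the $\nabla\eta_\varrho$ term may diverge, and neither "it also works since $|x|^{-\alpha}$ is less singular" nor "the same cutoff with a suitable power" is an argument — a rough cutoff at scale $\varrho$ always costs a factor $\varrho^{-1}$, and the hypothesis only gives $L^2$-decay of $u$ at rate $o(\varrho^{\alpha})$ on the annulus, so no choice of power closes the estimate when $\alpha<1$. The clean repair is to decouple the two issues. First show that the pointwise function $f:=\D_n u|_{\{0<|x|<\delta\}}$, which lies in $L^2(\{|x|<\delta\})$ by \eqref{eq:main.regularity}, coincides with the distribution $\D_n u$ on the whole ball: testing against $\eta_\varrho\phi$, the error is $|\langle u,(\alpha\cdot\nabla\eta_\varrho)\phi\rangle|\leq\|\phi\|_\infty\|u\|_{L^2(A_\varrho)}\|\nabla\eta_\varrho\|_{L^2(A_\varrho)}$, which vanishes for every $n\geq2$ if one uses a logarithmic cutoff when $n=2$ (equivalently: $\D_n u-f$ is an $H^{-1}$ distribution supported at a point, hence zero since $\{0\}$ has zero $H^1$-capacity for $n\geq2$). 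Once $u\in L^2$ and $\D_n u\in L^2$ hold distributionally on the full ball, ellipticity of $\D_n$ (your Fourier identity applied to $\chi u$, now legitimately) gives $\nabla u\in L^2$ near the origin for every $\alpha>0$.
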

\begin{proof}
It is enough to show that $u \in H^1(\{ |x|<\delta \};\C^N)$. From \eqref{eq:main.regularity} and since $|x|^{-\alpha} u \in L^2 (\{ |x| < \delta \};\C^N)$, we have  that $\D_n u \in L^2(\{ |x| < \delta \};\C^N)$. 
Let  $\chi \in C_c^\infty(\R^n;[0,1])$ such that $\chi(x) = 1$ for $|x|<\delta$:  we have that $\D_n( \chi u) \in L^2(\R^n;\C^N)$. Integrating by parts, using the fact that $(\D_n)^2 = -\Delta$ and integrating again by parts, we conclude that $|\nabla (\chi u)| \in L^2(\R^n;\R)$, so $|\nabla u| \in L^2(\{ |x| < \delta \} ;\R)$. Since $u \in L_{loc}^2(\R^n;\C^N)$, this gives $u \in H^1(\{ |x|<\delta \};\C^N)$.
\end{proof}

We finally show \eqref{eq:vanishing.infinite.order.origin}. Let $k \in N$: thanks to \Cref{lem:decay.u.origin}, for $R < \rho_k$ we have
\begin{equation}
\frac{1}{R^k} \int_{\{|x|<R\}} |u(x)|^2 \, dx \leq \frac{4}{R^k} \int_{\{|x|<R\}}  \abs{x}^{2k} \, dx \leq 4 R^k |\{|x| \leq R\}| \xrightarrow[R\to 0]{} 0.
\end{equation}
The proof of \Cref{thm:main} is complete.

\section{Proof of \Cref{theo:gamma}, \Cref{theo:gamma.example} and \Cref{thm:SUCinf}}
\label{sec:kelvin}
In this section we exploit a version of the Kelvin transformation adapted to the Dirac operator from
\cite[Section 4]{borrelli2020sharp}, \cite[Lemma 6.1]{jeong2022carleman}. 

For $u:\R^n \to \C^N$, the \emph{Kelvin transform} of $u$ is 
\begin{equation}\label{eq:defn.Kelvin}
\begin{split}
& u_\Kel : \R^n\setminus\{0\} \to \C^N
\\
&u_\Kel (x) := \frac{1}{\abs{x}^{n-1}} \left[ i  \alpha \cdot \frac{x}{|x|} \, \alpha_{n+1}\right] \, u \left(\frac{x}{\abs{x}^2}\right)
\quad \text{ for all }x\in \R^n\setminus\{0\},
\end{split}
\end{equation}
where we have used the shorthand notation $\alpha\cdot A = \sum_{j=1}^n A_j \alpha_j$ for all $A=(A_1,\dots,A_n) \in \C^n$.
It is immediate to see that $\norm{u}_{L^2(\R^n)} = \norm{u_\Kel}_{L^2(\R^n)}$. Moreover, we need the following lemma from \cite{borrelli2020sharp}.
\begin{lemma}[\cite{borrelli2020sharp}, Lemma 4.1]\label{lem:Kelvin}
Let $u:\R^n \to \C^N$, then
\begin{equation}
\D_n u_\Kel (x) = \frac{1}{|x|^2}  [\D_n u]_\Kel (x) \quad \text{ for all }x \in \R^n\setminus\{0\}.
\end{equation}
\end{lemma}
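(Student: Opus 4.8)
This is an algebraic identity, so the plan is to prove it by a direct computation, reducing everything to the anticommutation relations~\eqref{eq:anticommutation} and to the geometry of the inversion $\iota(x):=x/\abs{x}^2$. First I would fix notation: write $r=\abs{x}$, $\omega=x/\abs{x}$, $B:=\alpha\cdot\omega$ and $M:=i\,B\,\alpha_{n+1}$, so that $u_\Kel=r^{-(n-1)}M\,(u\circ\iota)$. From~\eqref{eq:anticommutation} I would record the handful of identities that do all the work: $B^2=\abs{\omega}^2\I_N=\I_N$; the partial anticommutator $\alpha_jB+B\alpha_j=2\omega_j\I_N$ for $1\le j\le n$; and $\alpha_j\alpha_{n+1}=-\alpha_{n+1}\alpha_j$. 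Combining them gives $BM=iB^2\alpha_{n+1}=i\alpha_{n+1}$ and, crucially, $\alpha_jM=M\alpha_j+2i\,\omega_j\,\alpha_{n+1}$ for $1\le j\le n$, i.e.\ $M$ commutes with each $\alpha_j$ up to the correction $2i\omega_j\alpha_{n+1}$. I would also use $\partial_j\omega_k=r^{-1}(\delta_{jk}-\omega_j\omega_k)$, whence $\partial_jB=\alpha\cdot\partial_j\omega=r^{-1}(\alpha_j-\omega_jB)$ and $\sum_j\alpha_j\,\partial_jB=r^{-1}(n\I_N-B^2)=(n-1)r^{-1}\I_N$.

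Next I would expand $\D_n u_\Kel=-i\sum_j\alpha_j\,\partial_j\big(r^{-(n-1)}M\,(u\circ\iota)\big)$ by Leibniz into three groups, according to whether $\partial_j$ falls on $r^{-(n-1)}$, on $M$, or on $u\circ\iota$. In the first group, $\partial_jr^{-(n-1)}=-(n-1)r^{-n}\omega_j$ together with $\sum_j\omega_j\alpha_jM=BM=i\alpha_{n+1}$ yields $-(n-1)r^{-n}\alpha_{n+1}\,(u\circ\iota)$; in the second, the formula for $\sum_j\alpha_j\,\partial_jB$ yields $+(n-1)r^{-n}\alpha_{n+1}\,(u\circ\iota)$; so these two groups cancel. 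For the third group I would use the chain rule $\partial_j(u\circ\iota)(x)=r^{-2}\sum_k(\delta_{jk}-2\omega_j\omega_k)(\partial_ku)(\iota(x))$, where $\delta_{jk}-2\omega_j\omega_k$ is (up to the factor $r^{-2}$) the Jacobian of $\iota$, a Householder reflection; writing $w:=(\nabla u)\circ\iota$ this group equals $-i\,r^{-(n-1)}r^{-2}\big(\sum_j\alpha_jMw_j-2(\alpha\cdot\omega)M(\omega\cdot w)\big)$, and applying $\alpha_jM=M\alpha_j+2i\omega_j\alpha_{n+1}$ to the first sum and $BM=i\alpha_{n+1}$ to the second, the two correction terms $\pm\,2i\alpha_{n+1}(\omega\cdot w)$ cancel, so the third group reduces to $-i\,r^{-(n+1)}M(\alpha\cdot w)$. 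Since $[\D_nu]_\Kel(x)=r^{-(n-1)}M\,(\D_nu)(\iota(x))=-i\,r^{-(n-1)}M(\alpha\cdot w)$, this is exactly $\abs{x}^{-2}[\D_nu]_\Kel(x)$.

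The one step I expect to require care is precisely this last cancellation: a computation that (incorrectly) treats the matrix prefactor $M$ as commuting with the $\alpha_j$'s leaves two spurious terms $\pm\,2i\,r^{-(n+1)}\alpha_{n+1}(\omega\cdot w)$, and the content of the lemma is that they cancel, thanks to the commutator rule $\alpha_jM=M\alpha_j+2i\omega_j\alpha_{n+1}$ and the reflection structure of $D\iota$. Everything is pointwise on $\R^n\setminus\{0\}$, so for $u$ merely in $H^1_{loc}$ the identity holds in the distributional sense by the same computation, tested against functions in $C_c^\infty(\R^n\setminus\{0\})$. As an alternative one could derive it abstractly from the conformal covariance of $\D_n$, $\iota$ being a conformal map with conformal factor $\abs{x}^{-2}$ and spinor lift $M$; but the hands-on check above is shorter and self-contained, and in any case the statement is quoted from~\cite{borrelli2020sharp}.
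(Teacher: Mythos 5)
Your computation is correct: the identities $B^2=\I_N$, $\alpha_jB+B\alpha_j=2\omega_j\I_N$, $BM=i\alpha_{n+1}$, $\alpha_jM-M\alpha_j=2i\omega_j\alpha_{n+1}$, the formula $\sum_j\alpha_j\partial_jB=(n-1)r^{-1}\I_N$, and the Householder structure of $D\iota$ all check out, and the cancellations in Terms~1/2 and inside Term~3 are exactly as you describe, yielding $\D_n u_\Kel=-ir^{-(n+1)}M(\alpha\cdot w)=\abs{x}^{-2}[\D_n u]_\Kel$. The paper itself gives no proof of this lemma --- it is quoted verbatim from \cite{borrelli2020sharp}, Lemma~4.1 --- so there is nothing to compare against; your self-contained Leibniz-plus-Clifford-algebra argument is a legitimate (and, as far as I can see, standard) derivation, and your remark about reading the identity distributionally for $u\in H^1_{loc}$ is the right degree of care for the way the lemma is actually used in \Cref{theo:gamma} and \Cref{thm:SUCinf}.
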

\subsection{Proof of \Cref{theo:gamma} }
Let  $u \in H_{loc}^1(\Omega_\rho;\C^n)$ such that \eqref{eq:gamma} holds true. Let $\psi:=u_\Kel \in H_{loc}^1(\{|x|>\rho\};\C^N)$.
Thanks to \Cref{lem:Kelvin}, for almost all $|x|>\rho$
\begin{equation}
\D_n \psi (x) 
=  \frac{1}{|x|^2}  [\D_n u]_\Kel (x) = \frac{1}{|x|^2} \frac{1}{|x|^{n-1}} \left[ i  \alpha \cdot \frac{x}{\abs{x}} \, \alpha_{n+1}\right] (\D_n u)\left(\frac{x}{|x|^2}\right).
\end{equation}
From the previous equation and \eqref{eq:gamma}, since $i  \alpha \cdot \tfrac{x}{\abs{x}} \, \alpha_{n+1}$ is unitary and $(i  \alpha \cdot \tfrac{x}{\abs{x}} \, \alpha_{n+1})^2 = \mathbb{I}_N$,
\begin{equation}\label{eq:defn.Kelvin.transformation}
\abs*{\D_n \psi (x)} \leq \frac{1}{|x|^2} \frac{C}{\abs*{\frac{x}{|x|^2}}^\gamma} \abs*{\frac{1}{|x|^{n-1}}u\left(\frac{x}{|x|^2}\right)}
= \frac{C}{|x|^{2-\gamma}} \abs{\psi(x)} \quad \text{ for a.a. }|x|>\rho.
\end{equation}
Since $2-\gamma<1$, we are in the assumptions of \cite[Theorem 1.1]{cassano2022sharp}, so there exists $\tau_0>0$ such that if $e^{\tau |x|^{2-2(2-\gamma)}}\psi = e^{\tau |x|^{2\gamma-2}}\psi\in L^2{(\{|x|>\rho\};\C^N)}$ for all $\tau>\tau_0$ then $\psi$ has compact support, but this is equivalent to say that if \eqref{eq:theo.power.gamma} holds true, then $u$ vanishes in a neighborhood of $0$. The theorem is then proven.

\subsection{Proof of \Cref{theo:gamma.example}}
Let $\gamma>1$ and $n\in\{2,3\}$. Thanks to \cite[Theorem 1.5]{cassano2022sharp}, for $\epsilon=2-\gamma<1$ there exists $\psi \in C^\infty(\R^n;\C^N)$, not compactly supported, such that for all $x \in\R^n$
\begin{align}
\notag
\abs{\psi(x)} & \leq C_1 e^{-C_2 |x|^{2-2\epsilon}} \\
\label{eq:solution.example}
\abs*{\D_n \psi(x)} & \leq 
\begin{cases}
C_3 |x|^{-\epsilon}\,\abs{\psi(x)} \quad &\text{ for }n=2, \\
C_3 |x|^{-\epsilon} (\log|x|)^3 \,\abs{\psi(x)} \quad &\text{ for }n=3,
\end{cases}
\end{align}
for some $C_1,C_2,C_3>0$.
We let $u:=\psi_\Kel$ and extend it by continuity to $0$ in the origin, getting $u \in C(\R^n;\C^N) \cap C^\infty(\R^n\setminus\{0\};\C^N)$, not vanishing in a neighborhood of the origin.
From \cite[eq.~(3.19)]{cassano2022sharp}, 
$\psi(z) = (C z^{m}, 0)^t$ for $|z| \in [0,\delta]$ for $C>0$, $\delta \in(0,1)$ and $m \in \N$ large. We can assume that $m$ is sufficiently big, in such a way that
$ u(z) = u(z_1,z_2)= \psi_\Kel(z) = \left[ i  \alpha \cdot \frac{(z_1,z_2)^t}{\abs{(z_1,z_2)^t}} \, \alpha_{n+1}\right] \frac{(C z^{m}, 0)^t}{|z|^{2m + n-1}}  $ is in $H^1(\{ |x| > R\};\C^N)$, for $R := \delta^{-1}>1$. 
Thanks to \Cref{lem:Kelvin}, the conditions \eqref{eq:gamma.sharp.u} and \eqref{eq:gamma.sharp.V} are verified for $C_3>0$ and (possibly different) $C_1,C_2>0$. Thanks to \eqref{eq:gamma.sharp.u}, \eqref{eq:gamma.sharp.V}, and \Cref{lem:Hloc1.origin},  $u \in H^1(\R^2;\C^2)$. 
The theorem is proven.

\subsection{Proof of \Cref{thm:SUCinf}}
 Let $ \psi \in H^1_{loc} (\widetilde\Omega_\rho ; \C^N)$ be a solution to \eqref{eq:main.inf} for $C \in (0,\frac12)$. Let 
Let $u:=\psi_\Kel \in H_{loc}^1(\{0<|x|<\rho\};\C^N)$; thanks to \Cref{lem:Kelvin}, for almost all $0< |x| < \rho$
\begin{equation}
\abs{\D_n u (x) } \leq \frac{C}{|x|} \abs{u(x)}.
\end{equation}
Since \eqref{eq:vanishing.infinite.order.inf} holds true for $\psi$, then \eqref{eq:vanishing.infinite.order.origin} holds true for $u$, so 
$|x|^{-1} u$ is in $L^2$ in a neighborhood of the origin (see \cite[Lemma 5.2]{pan2024unique}). Thanks to \Cref{lem:Hloc1.origin}, $u 
\in H_{loc}^1(\{|x|<\rho\};\C^N)$. 
We are in the assumptions of \Cref{thm:DCOKY}, so $u$ vanishes in a neighborhood of the origin, consequently $\psi$ has compact support. 

We let $C>\frac12$. Thanks to \Cref{thm:main}, 
there exists a (non-trivial)  function $u \in H^1(\R^2;\C^2) \cap C^\infty(\R^2 \setminus \{0\};\C^2)$, such that $\supp u \subset \{ |x| \leq 1 \}$. Moreover, $u$ vanishes at infinite order at the origin as in \eqref{eq:vanishing.infinite.order.origin} and \eqref{eq:main} holds true in $\R^2\setminus\{0\}$. We let $\psi := u_\Kel \in C^\infty(\R^2;\C^2)$. It is immediate to see that  \eqref{eq:main.inf} holds true in $\R^2\setminus\{0\}$ and \eqref{eq:vanishing.infinite.order.inf} holds true for $\psi$. 
The proof of the theorem is complete.

\printbibliography

\end{document}